\renewcommand{\d}{\partial}
\newcommand{\vepsilon}{\varepsilon}
\newcommand{\vphi}{\varphi}
\newcommand{\cE}{\mathcal{E}}
\newcommand{\cM}{\mathcal{M}}
\newcommand{\supp}{\mbox{supp }}
\newcommand{\bC}{\mathbb{C}}
\newtheorem{thm}{Theorem}
\newtheorem{prop}[thm]{Proposition}
\newtheorem{lem}[thm]{Lemma}
\newtheorem{cor}[thm]{Corollary}
\theoremstyle{definition}
\newtheorem{defn}[thm]{Definition}
\newtheorem{remark}[thm]{Remark}
\newtheorem{prob}[thm]{Problem}
\numberwithin{thm}{section}
\numberwithin{equation}{section}
\renewcommand{\[}{\begin{equation}}
\renewcommand{\]}{\end{equation}}
\newcommand{\wed}{\wedge}
\title[H\"older continuous subsolution problem]{On the H\"older continuous subsolution problem for the complex Monge-Amp\`ere equation}
\author[N.-C Nguyen]{Ngoc Cuong Nguyen}
\address{Department of Mathematics and Center for Geometry and its Applications, Pohang University of Science and Technology, 37673, The Republic of Korea}
\email{cuongnn@postech.ac.kr}
\subjclass[2010]{53C55, 35J96, 32U40}
\keywords{Dirichlet problem, weak solutions, H\"older continuous, Monge-Amp\`ere, subsolution problem}
\begin{document}

\maketitle


\em Dedicated to Professor Kang-Tae Kim on the occasion of his $60$th birthday \rm

\bigskip

\bigskip

\bigskip

\begin{abstract} We give a necessary and sufficient condition for positive Borel measures such that the Dirichlet problem, with zero boundary  data, for the complex Monge-Amp\`ere equation  admits H\"older continuous plurisubharmonic solutions. In particular, when the subsolution has finite Monge-Amp\`ere total mass, we obtain an affirmative answer to a question of Zeriahi \cite{DGZ16}.\end{abstract}

\section{Introduction}

Bedford and Taylor \cite{BT76} proved the existence of plurisubharmonic solutions of the Dirichlet problem for the complex Monge-Amp\`ere equation in a strictly pseudoconvex bounded domain $\Omega$ in $\bC^n$. The solution is continuous or H\"older continuous provided that the data is continuous or H\"older continuous, respectively. In their subsequent fundamental paper \cite{BT82} they developed pluripotential theory which becomes a powerful tool to study plurisubharmonic functions. Later, applying pluripotential theory methods the weak solution theory has been developed much further by Cegrell \cite{Ce98, Ce04} and Ko\l odziej \cite{ko98,ko05}. The continuous solution is obtained for more general right hand sides in \cite{ko05}, in particular for $L^p-$density measures, $p>1$, as an important class. Recently, Guedj, Zeriahi and Ko\l odziej \cite{GKZ08} showed that the solution is H\"older continuous, which is the optimal regularity, for such measures,  under some extra assumptions. Finally, the extra assumptions were removed  in \cite{BKPZ16}, \cite{Cha15a}. However, there is still an open question to find a characterisation for the measures admitting H\"older continuous solutions to the equation. If one requires only bounded solutions, then Ko\l odziej's subsolution theorem \cite{ko95} gives such a criterion. 

Let $\vphi \in PSH(\Omega)\cap C^{0,\alpha}(\bar \Omega)$ with $0 <\alpha \leq 1$, where $C^{0,\alpha}(\bar\Omega)$ stands for the set of H\"older continuous functions of exponent $\alpha$ in $\bar\Omega$. Moreover, we assume that 
\[\label{eq:sub-zero-boundary}
	\vphi = 0 \quad \mbox{on } \d \Omega.
\]
Given such a function $\vphi$ we consider the set of positive Borel measures on $\Omega$ which are dominated by the Monge-Amp\`ere operator of $\vphi$, namely,
\[\label{eq:holder-sub-1} \cM (\vphi, \Omega):= \left\{
\mu \mbox{ is positive Borel measure: } \mu \leq (dd^c\vphi)^n \mbox{ in } \Omega
\right\}.
\]
For measures in this set we also say that $\vphi$ is a H\"older continuous {\em subsolution} to $\mu$.
Let $\mu \in \cM (\vphi,\Omega)$ and consider the Dirichlet problem for $0< \alpha' \leq 1$,
\[\label{eq:zeriahi-prob}\begin{aligned}
&	u \in PSH(\Omega) \cap C^{0,\alpha'}(\bar \Omega)\\
&	(dd^cu)^n = \mu, \\
&	u=0 \quad\mbox{on } \d\Omega.
\end{aligned}\]
The following problem \cite[Question 17]{DGZ16} was raised by Zeriahi.  
\begin{prob} \label{prob:zeriahi} 
Can we always solve the Dirhichlet problem~\eqref{eq:zeriahi-prob}  for some $0<\alpha' \leq 1$?
\end{prob}
When $\vphi$ is merely bounded subsolution, the subsolution theorem in \cite{ko95} has provided a unique bounded solution.  Thus, to answer Zeriahi's question it remains to show the H\"older continuity of the bounded solution. Our main result is the following:

\begin{thm}
\label{thm:main} 
Let $\mu\in \cM(\vphi,\Omega)$. Assume that the H\"older continuous subsolution $\vphi$ has  finite Monge-Amp\`ere mass on $\Omega$, i.e.,
\[\label{eq:holder-sub-2}
	\int_\Omega (dd^c\vphi)^n <+\infty.
\] Then, the Dirichlet problem \eqref{eq:zeriahi-prob} is solvable.
\end{thm}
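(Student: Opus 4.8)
The plan is to reduce the problem to a regularity statement for an already-available solution, and then to prove H\"older continuity in two stages: near $\d\Omega$ and in the interior. First I would invoke Ko\l odziej's subsolution theorem \cite{ko95,ko05}: since $\vphi$ is bounded (being continuous on $\bar\Omega$) and $\mu\le(dd^c\vphi)^n$, the Dirichlet problem with boundary data $0$ admits a unique solution $u\in PSH(\Omega)\cap C^0(\bar\Omega)$ with $(dd^cu)^n=\mu$, continuity being inherited from that of $\vphi$ and of the boundary data. Applying the comparison principle to $u$ and $\vphi$ (equal boundary values, $(dd^cu)^n=\mu\le(dd^c\vphi)^n$) together with the maximum principle yields the sandwich
\[ \vphi \le u \le 0 \quad\text{in } \Omega. \]
Thus Zeriahi's question reduces to upgrading the regularity of $u$ from $C^0$ to $C^{0,\alpha'}$ for some $\alpha'>0$.

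The boundary estimate is the easy half. Since $\vphi\in C^{0,\alpha}(\bar\Omega)$ vanishes on $\d\Omega$, we have $|\vphi(z)|\le C\,d(z,\d\Omega)^\alpha$, and the sandwich gives $|u(z)|\le C\,d(z,\d\Omega)^\alpha$. Comparing an interior point $z$ with a nearest boundary point then controls $|u(z)-u(\zeta)|$ for $\zeta\in\d\Omega$ by $C|z-\zeta|^\alpha$, so the modulus of continuity of $u$ is already H\"older near $\d\Omega$; this control will also supply the boundary data needed for the interior comparison below.

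The interior estimate is the heart of the matter. I would use the sup-convolution $\hat u_\delta(z):=\sup_{|a|\le\delta}u(z+a)$ on $\Omega_\delta:=\{z: d(z,\d\Omega)>\delta\}$; since $\hat u_\delta\in PSH(\Omega_\delta)$ and $\hat u_\delta\ge u$, interior H\"older continuity is equivalent to a bound $\sup_{\Omega_{2\delta}}(\hat u_\delta-u)\le C\delta^\gamma$, and on $\d\Omega_{2\delta}$ the difference is already $\le C\delta^\alpha$ by the boundary step. The standard mechanism converts an $L^1(\mu)$-bound on $\hat u_\delta-u$ into this sup-bound via a stability estimate for the Monge-Amp\`ere operator, so the problem reduces to proving
\[ \int_\Omega (\hat u_\delta-u)\,d\mu \le C\,\delta^\gamma \]
for some $\gamma>0$. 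Here the two hypotheses enter: because $\hat u_\delta-u\ge0$ and $\mu\le(dd^c\vphi)^n$, it suffices to bound $\int_\Omega(\hat u_\delta-u)\,(dd^c\vphi)^n$, and the finiteness of $\int_\Omega(dd^c\vphi)^n$ together with $\vphi\in C^{0,\alpha}$ is exactly what lets one extract a power $\delta^\gamma$ — morally by integrating by parts to move the operator off $\vphi$ onto the regularization $\hat u_\delta-u$, whose variation over scale $\delta$ costs a factor $\delta^\alpha$, while the finite mass guarantees the remaining integrals converge.

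I expect the main obstacle to be precisely this last integral bound: one must control a quantity built from the \emph{unknown} solution $u$ using only information about the \emph{subsolution} $\vphi$, and the finite total mass provides no local or uniform regularity by itself — the leverage must come entirely from pairing finiteness of the mass with the H\"older continuity of $\vphi$ (for instance through the fact that $(dd^c\vphi)^n$ is then a moderate measure that integrates H\"older functions with a quantitative gain). A secondary technical point is assembling the interior and boundary estimates into a single global H\"older exponent $\alpha'$, which will in general be strictly smaller than $\alpha$, with the loss dictated by the stability exponent and the dimension $n$.
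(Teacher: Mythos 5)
Your overall architecture coincides with the paper's: Ko\l odziej's subsolution theorem to produce a continuous solution, the sandwich $\vphi\le u\le 0$ for the boundary H\"older estimate, and then regularization plus a stability estimate to reduce the interior estimate to a bound on $\int_\Omega(\hat u_\delta-u)\,d\mu$. But the step you flag as "morally by integrating by parts to move the operator off $\vphi$ onto the regularization $\hat u_\delta-u$, whose variation over scale $\delta$ costs a factor $\delta^\alpha$" is exactly where the proof lives, and as sketched it does not work: $\hat u_\delta-u$ is built from the \emph{unknown} $u$, which has no a priori modulus of continuity, so its "variation over scale $\delta$" costs nothing you can quantify. The paper's resolution is to regularize $\vphi$ instead of $u$ and to prove a genuinely different intermediate statement (Lemma~\ref{lem:holder-potential-measure}): the measure $(dd^c\vphi)^n$ is H\"older continuous as a functional on the Cegrell class $\cE_0'$, i.e.\ $\int_\Omega|w-v|(dd^c\vphi)^n\le C\|w-v\|_{L^1(dV_{2n})}^{\alpha'}$ for all $w,v\in\cE_0'$. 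This is proved by induction on the number of factors $dd^c\vphi$ in $(dd^c\vphi)^k\wedge\beta^{n-k}$, splitting $dd^c\vphi=dd^c\vphi_\delta+dd^c(\vphi-\vphi_\delta)$, using $|\vphi_\delta-\vphi|\le C\delta^\alpha$ and $\|dd^c\vphi_\delta\|\le C\delta^{-2}$, and --- this is where the hypothesis $\int_\Omega(dd^c\vphi)^n<+\infty$ actually enters --- controlling the boundary terms $\int_{\d\Omega}(\vphi_\delta-\vphi)\,d^cw\wedge S$ via the Cegrell inequality \eqref{eq:cegrell-ineq}. Your sketch never isolates this statement, and without it the finite-mass hypothesis plays no identifiable role in your argument.

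Two further ingredients are missing. First, to feed the functional H\"older continuity you need the Lebesgue-measure estimate $\int_{\Omega_\delta}|\hat u_\delta-u|\,dV_{2n}\le C\delta^{\alpha_4}$ (the paper's Lemma~\ref{lem:mass-growth}), which requires its own argument bounding $\int_{\Omega_\delta}\Delta u$ by $C\delta^{-(2-\alpha_4)}$ through integration by parts against powers of the defining function; note also that this estimate is obtained for the \emph{mean-value} regularization, and one passes to the sup-convolution only at the very end. Second, the stability estimate you invoke as "the standard mechanism" is not free: it requires $\mu$ to satisfy a volume--capacity inequality of the form \eqref{eq:vol-cap}, which in the paper is itself \emph{derived} from the H\"older continuity of $\mu$ on $\cE_0'$ (Proposition~\ref{prop:analogue-dn}) and does not follow merely from $\mu\le(dd^c\vphi)^n$ with finite mass. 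So while your plan identifies the correct obstacle, it does not contain the idea that overcomes it.
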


We actually obtain a necessary and sufficient condition for a measure in $\cM(\vphi,\Omega)$ with finite total mass such that the Dirichlet problem~\eqref{eq:zeriahi-prob} is solvable. Using this characterisation (Theorem~\ref{thm:general}) we can reprove the results from \cite{BKPZ16}, \cite{Cha15a}, \cite{GKZ08} in the case of zero boundary. We also show in Corollary~\ref{cor:simple-consequence} that there are several class of measures which satisfy the assumptions of the theorem. Charabati \cite{Cha15b} has studied very recently Problem~\ref{prob:zeriahi} for these measures. 

There is a strong connection between Problem~\ref{prob:zeriahi} and the H\"older continuity of weak solutions to Monge-Amp\`ere equations on a compact K\"ahler manifold $(X,\omega)$. Namely, the characterisation \cite[Theorem~4.3]{DDGKPZ14} (see also \cite{ko08}) says that a positive Borel measure on $(X,\omega)$ is the Mong-Amp\`ere measure of a H\"older continuous $\omega-$plurisubharmonic function if and only if  it is dominated locally (on local coordinate charts) by the one of H\"older continuous plurisubharmonic functions. On the other hand, Dinh-Nguyen \cite{DN16} has found another global characterisation for this problem by using super-potential theory \cite{DS09}. This characterisation has been used in \cite{viet16} to generalise the result obtained previously by Hiep \cite{hiep}. Our necessary and sufficient condition can be consider as the local analogue of \cite{DN16}. It may possibly be used to give a local proof of the results in \cite{hiep}, \cite{viet16}.

\medskip

{\em Acknowledgement.}  I am very grateful to S\l awomir Ko\l odziej for giving many valuable comments on the drafts of the paper which helped to improve significantly its final version. The author is supported by  the NRF Grant 2011-0030044 (SRC-GAIA) of The Republic of Korea.

\section{A general characterisation}

Let $\Omega$ be a strictly pseudoconvex bounded domain in $\bC^n$. Let $\rho\in C^2(\bar\Omega)$ be a strictly plurisubhamonic defining function for $\Omega = \{\rho<0\}$.  The standard K\"ahler form in $\bC^n$ is denoted by $\beta:=dd^c |z|^2$. Without of loss generality we may assume that 
\[\label{eq:def-fct-a}
	dd^c \rho \geq \beta \quad \mbox{ on } \bar\Omega.
\]
In this section we will prove a general characterisation of measures in $\cM(\vphi,\Omega)$ which are Mong-Amp\`ere measures of H\"older continuous plurisubharmonic functions. To state our result we need definitions and properties relate to the Cegrell classes. The class $\cE_0:= \cE_0(\Omega)$ is defined by
\[ \left\{v\in PSH\cap L^\infty(\Omega) : \lim_{z\to \d \Omega} v(z) =0, \mbox{ and } \int_\Omega (dd^c v)^n<+\infty\right\}.
\]
We will also work with a subclass of $\cE_0$, namely,
\[
	\cE_0' = \left\{v \in \cE_0 : \int_\Omega (dd^c v)^n \leq 1\right\}.
\]
The following basic properties of $\cE_0$ and $\cE_0'$ which will be used later, and we refer the readers to \cite{Ce98,Ce04} for detailed proofs.
\begin{prop}
\label{prop:cegrell-results} We have
\begin{enumerate}
\item[(a)]
The integration by parts holds true in $\cE_0$; if $u, v \in \cE_0$, so does $u+v$.
\item[(b)]
Let $u \in \cE_0$. Let $v \in PSH\cap L^\infty(\Omega)$ be such that $\lim_{z\to \d \Omega} v(z) =0$, and $v\geq u$. Then, $v\in \cE_0$ and
\[\label{eq:ma-mass-comp}
	\int_\Omega (dd^c v)^n \leq \int_\Omega (dd^cu)^n.
\] 
\item[(c)]
Let $u, v\in \cE_0'$. Then, $\max\{u,v\} \in \cE_0'$ and $\max\{u, -s\} \in \cE_0'$ for every $s>0$.
\item[(d)] For $u_1, ..., u_n\in \cE_0$ the Cegrell inequality reads as follows.
\[\label{eq:cegrell-ineq}
	\int_\Omega dd^c u_1 \wed \cdots \wed dd^c u_n 
\leq	\left(\int_\Omega (dd^cu_1)^n\right)^\frac{1}{n} \cdots \left(\int_\Omega (dd^cu_n)^n\right)^\frac{1}{n}.
\]
\end{enumerate}
\end{prop}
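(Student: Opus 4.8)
The statement collects four standard facts about the Cegrell classes, so the plan is to reduce each of them to the two fundamental tools of pluripotential theory on a strictly pseudoconvex domain: the Bedford--Taylor convergence theorems for the Monge--Amp\`ere operator along decreasing sequences of bounded plurisubharmonic functions, and integration by parts. The starting point is that every $u\in\cE_0$ can be approximated by a decreasing sequence of smooth plurisubharmonic functions $u_j$ (available since $\Omega=\{\rho<0\}$ is strictly pseudoconvex) with $u_j\to 0$ on $\d\Omega$; for smooth functions all the identities below are classical, and one passes to the limit using that mixed products $dd^cu_{1,j}\wed\cdots\wed dd^cu_{n,j}$ converge weakly, by \cite{BT82}. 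For part (a), the symmetric identity $\int_\Omega w\,dd^cv\wed T=\int_\Omega v\,dd^cw\wed T$ (for $v,w\in\cE_0$ and $T$ a wedge product of $dd^c$ of functions in $\cE_0$) comes from Stokes' theorem at the smooth level, where the boundary contribution $\int_{\d\Omega}(w\,d^cv-v\,d^cw)\wed T$ vanishes because $v=w=0$ on $\d\Omega$. That $u+v\in\cE_0$ is immediate except for finiteness of the total mass, which follows by expanding $(dd^c(u+v))^n$ binomially and bounding each mixed term by part (d).

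For part (b) I would establish the monotonicity of total mass from the telescoping identity
\begin{equation*}
\int_\Omega(dd^cu)^n-\int_\Omega(dd^cv)^n=\sum_{k=0}^{n-1}\int_\Omega dd^c(u-v)\wed(dd^cu)^k\wed(dd^cv)^{n-1-k}.
\end{equation*}
After one application of Stokes, each term on the right reduces to a boundary integral $\int_{\d\Omega}d^c(u-v)\wed S$ with $S$ a closed positive current. The point is that $g:=u-v\leq 0$ in $\Omega$ with $g=0$ on $\d\Omega$, so $g$ attains its maximum along the boundary and the outward normal derivative appearing in $d^cg$ is nonnegative; this forces every summand to be nonnegative, giving $\int_\Omega(dd^cv)^n\leq\int_\Omega(dd^cu)^n$. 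In particular the mass of $v$ is finite, so $v\in\cE_0$. I expect this boundary sign analysis to be the main obstacle: making the one-sided normal-derivative estimate rigorous for merely bounded $u,v$ requires the approximation scheme above with uniform control of the boundary terms, or else a direct appeal to the Bedford--Taylor comparison principle $\int_{\{u<v\}}(dd^cv)^n\leq\int_{\{u<v\}}(dd^cu)^n$ applied to $u$ and the perturbations $(1-\vepsilon)v$, followed by $\vepsilon\to 0$ (which itself needs care where $v=0$).

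Granting (b), part (c) is essentially free: $\max\{u,v\}$ and $\max\{u,-s\}$ are bounded plurisubharmonic functions tending to $0$ on $\d\Omega$, and both dominate $u$, so (b) yields $\int_\Omega(dd^c\max\{u,v\})^n\leq\int_\Omega(dd^cu)^n\leq 1$, and likewise $\int_\Omega(dd^c\max\{u,-s\})^n\leq 1$; hence both belong to $\cE_0'$. Finally, for the Cegrell inequality (d) the plan is induction on the number of factors, built on the bilinear Cauchy--Schwarz estimate: for a closed positive current $T$ of bidegree $(n-1,n-1)$ of the relevant type, integration by parts shows that $(a,b)\mapsto\int_\Omega dd^ca\wed dd^cb\wed T$ is a symmetric, positive semidefinite bilinear form (its diagonal $\int_\Omega(dd^ca)^2\wed T$ is a positive measure), so that
\begin{equation*}
\int_\Omega dd^ca\wed dd^cb\wed T\leq\Big(\int_\Omega(dd^ca)^2\wed T\Big)^{1/2}\Big(\int_\Omega(dd^cb)^2\wed T\Big)^{1/2}.
\end{equation*}
Iterating this estimate over the $n$ factors and collecting the exponents produces the product of $n$-th roots of the pure masses. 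All four parts are carried out in detail by Cegrell \cite{Ce98,Ce04}, to which the excerpt defers.
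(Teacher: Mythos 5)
The paper offers no proof of this proposition: it lists these as ``basic properties'' and refers the reader to \cite{Ce98,Ce04}, so there is no in-paper argument to compare yours against; what follows assesses your sketch on its own terms. Your overall route --- smooth decreasing approximation plus Stokes for (a), monotonicity of total mass for (b), reduction of (c) to (b), iterated Cauchy--Schwarz for (d) --- is the standard Cegrell route, and (a) and (c) are fine as reductions.

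Two steps are under-justified. In (b), the telescoping identity already presupposes that the mixed masses $\int_\Omega dd^c(u-v)\wed(dd^cu)^k\wed(dd^cv)^{n-1-k}$ are finite, which is part of what is being proved (a priori $v$ is only a bounded plurisubharmonic function with unknown total mass); one must instead swap $u$ for $v$ one factor at a time so that the current integrated against has finite mass by the previous step, or work on $\Omega'\Subset\Omega$ and exhaust. You do flag the boundary-sign difficulty and correctly name the comparison-principle alternative, which is the route that actually closes the argument. The more serious flaw is in (d): positivity of the diagonal $\int_\Omega(dd^ca)^2\wed T$ for $a$ \emph{plurisubharmonic} only says the symmetric form is nonnegative on a cone, not that it is positive semidefinite on the vector space of differences $a-tb$ (for which $\bigl(dd^c(a-tb)\bigr)^2\wed T$ is not a positive measure), and Cauchy--Schwarz does not follow formally from that. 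The two-factor inequality $\int_\Omega dd^ca\wed dd^cb\wed T\leq\bigl(\int_\Omega(dd^ca)^2\wed T\bigr)^{1/2}\bigl(\int_\Omega(dd^cb)^2\wed T\bigr)^{1/2}$ is true, but it is precisely the nontrivial content of Cegrell's lemma, proved by integration by parts down to pairings such as $\int dg\wed d^cg\wed S$ that are pointwise nonnegative for \emph{arbitrary} real $g$, with careful handling of the boundary terms; it is not a consequence of the formal bilinear-form argument you give. Once that lemma is granted, your iteration to the product of $n$-th roots is correct.
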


In what follows we always denote for $p>0$
\[
	\|\cdot\|_p := \left(\int_\Omega |\cdot|^p dV_{2n}\right)^\frac{1}{p} \mbox{ and }
	\|\cdot\|_\infty := \sup_\Omega |\cdot|,
\]
where $dV_{2n}$ is the Lebesgue measure in $\bC^n$.
The constant $C>0$ will appear in many places below, we understand that it is a uniform constant and it may differ from place to place. 

\begin{lem}
\label{lem:holder-equivalence} Let $\nu$ be a positive Borel measure on $\Omega$ and let $0<\alpha \leq 1$. Then, 
\[\label{eq:measure-holder}
	\left|\int_\Omega (u-v) d\nu \right| \leq C \|u-v\|_{1}^\alpha \quad \forall u,v \in \cE_0'
\]
if and only if 
\[ \label{eq:measure-holder-b}
\int_\Omega |u-v| d\nu \leq C \|u-v\|_{1}^\alpha \quad \forall u,v \in \cE_0'.
\]
Here, the constant $C>0$ is independent of $u,v.$
\end{lem}

\begin{proof} The sufficient condition is clear. To prove the necessary condition we first have  that 
\[\notag
	|u-v| = (\max\{u,v\} -u) + (\max\{u,v\} -v).
\]
Since $u,v\in \cE_0'$, so  $\max\{u,v\} \in \cE_0'$. Hence we apply \eqref{eq:measure-holder} twice to get the desired inequality.
\end{proof}

For a general positive Borel measure $\nu$ it defines a natural functional  
\[\notag
	\nu (w) := \int_\Omega w \;d\nu
\]
for a measurable function $w$ in $\Omega$.
We introduce the following notion which is probably a local counterpart of the one in Dinh-Sibony \cite{DS09} and Dinh-Nguyen \cite{DN16} for positive measures.

\begin{defn}
\label{def:measure-holder} The measure $\nu$ is called to be H\"older (or $\alpha-$H\"older) continuous on $\cE_0'$ if there exists $0<\alpha \leq 1$ such that
\[\label{eq:measure-holder-c}
	\left|\nu(u-v)\right| = \left| \int_\Omega (u-v) d\nu\right|\leq C \|u-v\|_1^\alpha \quad \forall u, v \in \cE_0',
\]
where the constant $C>0$ is independent of $u,v.$
\end{defn}

Thanks to Lemma~\ref{lem:holder-equivalence} we can verify the H\"older continuity of a positive Borel measure $\nu$ by using either \eqref{eq:measure-holder} or \eqref{eq:measure-holder-b}. The following properties are consequences of the definition.

\begin{lem}
\label{lem:properties-mes-hol}
Let $\nu$ be $\alpha-$H\"older continuous on $\cE_0'$, then we have 
\begin{itemize}
\item[(a)] $\cE_0 \subset L^1(d\nu)$; 
\item[(b)] if $\mu \leq \nu$, then $\mu$ is also $\alpha-$H\"older continuous on $\cE_0'$;
\item[(b)] $\nu$ is a Radon measure, and $\nu$ vanishes on pluripolar sets in 
$\Omega$.
\end{itemize}
\end{lem}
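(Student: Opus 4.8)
The plan is to handle the three assertions separately: (a) and (b) are quick consequences of the equivalence in Lemma~\ref{lem:holder-equivalence}, whereas the vanishing on pluripolar sets requires a blow-up construction and is the real content.

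For (a), I would first note that the zero function lies in $\cE_0'$, so taking $v=0$ in the integrated form \eqref{eq:measure-holder-b} (legitimate by Lemma~\ref{lem:holder-equivalence}) yields $\int_\Omega |u|\,d\nu \leq C\|u\|_1^\alpha<\infty$ for every $u\in\cE_0'$. For a general $u\in\cE_0$ with $M:=\int_\Omega(dd^c u)^n$, the maximum principle gives $u\le 0$, and the rescaling $w:=u/\max\{1,M^{1/n}\}$ is plurisubharmonic, bounded, vanishes on $\d\Omega$ and has mass $\le 1$, hence $w\in\cE_0'$. Since $\int_\Omega|u|\,d\nu=\max\{1,M^{1/n}\}\int_\Omega|w|\,d\nu$, we conclude $u\in L^1(d\nu)$. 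For (b), by Lemma~\ref{lem:holder-equivalence} it suffices to estimate $\int_\Omega|u-v|\,d\mu$; as $|u-v|\ge 0$ is Borel and $\mu\le\nu$, I get $\int_\Omega|u-v|\,d\mu\le\int_\Omega|u-v|\,d\nu\le C\|u-v\|_1^\alpha$, so $\mu$ is $\alpha$-H\"older continuous on $\cE_0'$ with the same $\alpha$ and $C$.

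For (c), the finiteness on compacts uses that $\rho\in\cE_0$: it is strictly plurisubharmonic, of class $C^2$ up to $\bar\Omega$, vanishes on $\d\Omega$, and has finite mass since $(dd^c\rho)^n\le C\beta^n$. Given a compact $K\Subset\Omega$, choose $\delta>0$ with $\rho\le-\delta$ on $K$; then $\delta\,\nu(K)\le\int_\Omega|\rho|\,d\nu<\infty$ by (a), so $\nu$ is finite on compacts, and a locally finite Borel measure on the locally compact, second countable space $\Omega$ is automatically Radon. For the pluripolar statement, by inner regularity it is enough to show $\nu(K)=0$ for every compact pluripolar $K$ (a general pluripolar set being contained in a Borel $G_\delta$ one of the form $\{\varphi=-\infty\}$). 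Here I would invoke the Cegrell-class description of pluripolar sets \cite{Ce04}: there is $\varphi\in\mathcal{F}(\Omega)$, $\varphi\le 0$ with $K\subset\{\varphi=-\infty\}$, whose truncations $\varphi_j:=\max\{\varphi,-j\}$ lie in $\cE_0$, have uniformly bounded masses $\int_\Omega(dd^c\varphi_j)^n\le M$, satisfy $\varphi_j=-j$ on $K$, and obey $\|\varphi_j\|_1\le\|\varphi\|_1<\infty$. Setting $u_j:=\varphi_j/M^{1/n}\in\cE_0'$, a Chebyshev inequality together with the integrated H\"older estimate gives $M^{-1/n}j\,\nu(K)\le\int_K|u_j|\,d\nu\le\int_\Omega|u_j|\,d\nu\le C\|u_j\|_1^\alpha\le C(\|\varphi\|_1/M^{1/n})^\alpha$, a bound independent of $j$; letting $j\to\infty$ forces $\nu(K)=0$, and then $\nu$ vanishes on all pluripolar sets.

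The main obstacle is precisely this last step: everything hinges on producing functions in $\cE_0'$ that tend to $-\infty$ on $K$ while keeping \emph{both} the Monge--Amp\`ere mass and the $L^1$-norm under control. This is exactly where pluripolarity of $K$ must be used, through the existence of a finite-mass potential $\varphi$ with pole set containing $K$; a naive truncation of an arbitrary negative potential would let the masses grow like $j^n$ and, after rescaling into $\cE_0'$, would no longer blow up on $K$.
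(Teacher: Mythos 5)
Your treatment of (a), (b) and the Radon property is fine. For (a) and (b) you argue essentially as the paper does (the paper simply declares these immediate from \eqref{eq:measure-holder-b}); for the Radon property you prove local finiteness via $\rho\in\cE_0$ and Chebyshev, where the paper instead invokes $C^\infty_c(\Omega)\subset\cE_0\cap C^0(\bar\Omega)-\cE_0\cap C^0(\bar\Omega)$; both routes are valid.

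The pluripolar step, however, has a genuine gap, located exactly in the sentence you flag as the crux. Since the H\"older estimate is only assumed on $\cE_0'$, you need functions of $\cE_0'$ (not merely of $\cF(\Omega)$ or $\cF\cap L^\infty$) that blow up on $K$. Your candidates are the truncations $\varphi_j=\max\{\varphi,-j\}$ of a finite-mass potential $\varphi\in\cF(\Omega)$ with poles on $K$, and you assert $\varphi_j\in\cE_0$. But membership in $\cE_0$ requires the pointwise boundary limit $\lim_{z\to\zeta}\varphi_j(z)=0$ at every $\zeta\in\d\Omega$, which forces $\lim_{z\to\zeta}\varphi(z)=0$ at every boundary point; this fails for general $\varphi\in\cF$ (already for $n=1$: a Green potential of a finite measure whose support accumulates at a boundary point lies in $\cF$ of the disc, yet its truncations do not tend to $0$ there). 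So either you must construct $\varphi$ with this additional boundary regularity --- which is essentially a localized Josefson-type construction and is precisely what you are black-boxing --- or you must replace the truncations. Two repairs: (i) keep your $\varphi\in\cF$ but use its defining sequence $g_j\in\cE_0$, $g_j\downarrow\varphi$, $\sup_j\int_\Omega(dd^cg_j)^n<+\infty$; a Dini-type argument for decreasing upper semicontinuous functions gives $\sup_Kg_j\to-\infty$, and your Chebyshev computation then goes through with $g_j$ in place of $\varphi_j$; or (ii) do what the paper does and avoid constructing $\varphi$ altogether: take compact neighbourhoods $G_j$ decreasing to $K$ with $G_{j+1}$ contained in the interior of $G_j$, and use the relative extremal functions $h^*_{G_j}$, which lie in $\cE_0$ automatically, satisfy $h^*_{G_j}=-1$ on $K$, and belong to $\cE_0'$ after division by $[cap(G_j,\Omega)]^{1/n}$; the H\"older estimate yields $\nu(K)\le C\,[cap(G_j,\Omega)]^{(1-\alpha)/n}\,\|h^*_{G_j}\|_1^{\alpha}$, and the pluripolarity of $K$ enters only through $h^*_{G_j}\nearrow h^*_K\equiv 0$, which sends the right-hand side to $0$. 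The paper's route is shorter, self-contained modulo Bedford--Taylor, and needs no blow-up construction.
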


\begin{proof}
The property $(a)$ is obvious. The property $(b)$ follows from \eqref{eq:measure-holder-b}. Lastly, since $C^{\infty}_c(\Omega)\subset \cE_0 \cap C^0(\bar\Omega) - \cE_0\cap C^0(\bar\Omega)$ (see \cite[Lemma~3.1]{Ce04}) it follows that $\nu$ is a Radon measure. Next, let $K \subset \Omega$ be a compact pluripolar set. Let $G_j$ be a decreasing sequence of compact sets in $\Omega$ satisfying 
$$G_{j+1} \subset \overset{o}{G_j} \quad\mbox{and} \quad \bigcap_{j=1}^\infty G_j = K.$$ 
Let $h_{G_j}$ denote the relatively extremal function of $G_j$. By a theorem of Bedford-Taylor \cite{BT82} we have
\[\notag
	cap(G_j,\Omega) = \int_\Omega (dd^c h_{G_j}^*)^n \quad
	\mbox{and}\quad h_{G_j}^* = -1\quad \mbox{on } \overset{o}{G_j}.
\]
Hence,
\[\notag
\begin{aligned}
	\nu(K) \leq \int_\Omega |h_{G_j}^*| d\nu \leq C \left[cap (G_j, \Omega)\right]^\frac{1-\alpha}{n}  \left(\int_\Omega |h_{G_j}^*| dV_{2n}\right)^\alpha,
\end{aligned}\] 
where the second inequality used the fact that $\nu$ is $\alpha-$H\"older continuous on $\cE_0'.$ Since $h_{G_j}^* \nearrow h_K^* \equiv 0$ in $\Omega$, then the right hand side tends to $0$ as $j\to \infty$. Thus, $\nu$ vanishes on pluripolar sets in $\Omega.$
\end{proof}

Let us state the announced above characterisation for measures in $\cM(\vphi,\Omega)$.
\begin{thm}
\label{thm:general} Let $\mu$ be a positive Borel measure such that $\mu \leq (dd^c\vphi)^n$ for $\vphi \in PSH(\Omega)\cap C^{0,\alpha}(\bar\Omega)$ and $\vphi =0$ on $\d\Omega$, where $0<\alpha \leq 1.$ Assume that $\mu$ has finite total mass. Then, $\mu$ is H\"older continuous on $\cE_0'$ if and only if there exists $u\in PSH(\Omega) \cap C^{0,\alpha'}(\bar\Omega)$, where $0<\alpha'\leq 1$, satisfying
\[\notag
	(dd^c u)^n = \mu, \quad u_{|_{\d\Omega}} = 0.
\]
\end{thm}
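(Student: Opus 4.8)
The plan is to prove the two implications separately. Throughout I would use that every function in play lies in $\cE_0$: a bounded plurisubharmonic function with zero boundary values and finite total Monge-Amp\`ere mass belongs to $\cE_0$, so that the integration by parts and the Cegrell inequality from Proposition~\ref{prop:cegrell-results} are available. By Lemma~\ref{lem:holder-equivalence} it suffices, for the implication producing H\"older continuity, to bound the signed integral \eqref{eq:measure-holder}; and one may rescale functions to normalize their Monge-Amp\`ere mass so that the defining estimate \eqref{eq:measure-holder-c} of $\cE_0'$-H\"older continuity becomes applicable.

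\textbf{Existence of a H\"older solution implies H\"older continuity of $\mu$.} Suppose $u\in PSH(\Omega)\cap C^{0,\alpha'}(\bar\Omega)$ solves $(dd^cu)^n=\mu$ with $u=0$ on $\d\Omega$; then $u\in\cE_0$. Given $w=w_1-w_2$ with $w_1,w_2\in\cE_0'$, I would estimate $\int_\Omega w\,(dd^cu)^n$ by inserting the smooth regularization $u_\vepsilon:=u*\rho_\vepsilon$ on $\Omega_\vepsilon$ and splitting
\[\notag
\int_\Omega w\,(dd^cu)^n=\int_\Omega w\,\big[(dd^cu)^n-(dd^cu_\vepsilon)^n\big]+\int_\Omega w\,(dd^cu_\vepsilon)^n.
\]
For the first term I expand the difference of Monge-Amp\`ere measures as the telescoping sum $dd^c(u-u_\vepsilon)\wed\sum_k(dd^cu)^k\wed(dd^cu_\vepsilon)^{n-1-k}$ and integrate by parts to move $dd^c$ onto $w$; using the H\"older modulus $\|u-u_\vepsilon\|_\infty\le C\vepsilon^{\alpha'}$ together with the Cegrell inequality \eqref{eq:cegrell-ineq} to bound the resulting mixed masses, this term is $\le C\vepsilon^{\alpha'}$. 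For the second term the crude Hessian bound $\|D^2u_\vepsilon\|_\infty\le C\vepsilon^{-2}$ gives $(dd^cu_\vepsilon)^n\le C\vepsilon^{-2n}dV_{2n}$, whence it is $\le C\vepsilon^{-2n}\|w\|_1$. Balancing $\vepsilon^{\alpha'}$ against $\vepsilon^{-2n}\|w\|_1$ yields $\big|\int_\Omega w\,d\mu\big|\le C\|w\|_1^{\alpha'/(\alpha'+2n)}$. The only delicate point is the boundary contribution of the integrations by parts on $\Omega_\vepsilon$, which I would control by a cut-off and the vanishing of the functions at $\d\Omega$.

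\textbf{H\"older continuity of $\mu$ implies existence of a H\"older solution.} Since $\mu$ vanishes on pluripolar sets (Lemma~\ref{lem:properties-mes-hol}) and admits the bounded subsolution $\vphi$, Ko\l odziej's subsolution theorem \cite{ko95} furnishes a unique $u\in PSH(\Omega)\cap L^\infty(\Omega)$ with $(dd^cu)^n=\mu$ and $u=0$ on $\d\Omega$; the comparison principle applied to $\vphi$ and $u$ (equal boundary values, $(dd^c\vphi)^n\ge(dd^cu)^n$) gives $\vphi\le u\le 0$. Consequently the H\"older continuity and vanishing of $\vphi$ on $\d\Omega$ force $|u(z)|\le|\vphi(z)|\le C\,\mathrm{dist}(z,\d\Omega)^\alpha$, so $u$ is already under control near the boundary and it remains only to establish an interior H\"older estimate.

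For the interior estimate I would compare $u$ with its averaged regularizations $\hat u_\delta(z):=V(\delta)^{-1}\int_{|\zeta|\le\delta}u(z+\zeta)\,dV_{2n}(\zeta)$ on $\Omega_\delta$, which are plurisubharmonic, satisfy $\hat u_\delta\ge u$, and obey the classical bound $\|\hat u_\delta-u\|_1\le C\delta^2$. After normalizing $u$ and $\hat u_\delta$ to total mass at most $1$ and gluing $\hat u_\delta$ to a suitable multiple of the defining function $\rho$ near $\d\Omega$ so as to land in $\cE_0'(\Omega)$, the H\"older continuity of $\mu$ yields
\[\notag
\int_{\Omega_\delta}(\hat u_\delta-u)\,d\mu\le C\,\|\hat u_\delta-u\|_1^{\alpha}\le C\delta^{2\alpha}.
\]
The main obstacle is to upgrade this integral estimate to the uniform bound $\sup_{\Omega_\delta}(\hat u_\delta-u)\le C\delta^{\alpha''}$: since $\hat u_\delta$ does not itself solve a Monge-Amp\`ere equation, it cannot be fed directly into the comparison principle, so I expect to need a stability argument in the spirit of Ko\l odziej --- a Chebyshev-type capacity inequality $\mathrm{cap}(\{u<\hat u_\delta-s\})\le s^{-n}\int(\hat u_\delta-u)\,d\mu$ followed by a volume--capacity iteration --- to convert the $\mu$-mass control into pointwise control. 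Combining this interior estimate with the boundary control inherited from $\vphi$ then gives $u\in C^{0,\alpha''}(\bar\Omega)$, completing the proof.
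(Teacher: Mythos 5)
Your overall architecture matches the paper's, but both directions have gaps at precisely the technical points the paper's proof is organised around. In the direction ``H\"older solution $\Rightarrow$ $\mu$ H\"older on $\cE_0'$'', the all-at-once telescoping $dd^c(u-u_\vepsilon)\wed\sum_k(dd^cu)^k\wed(dd^cu_\vepsilon)^{n-1-k}$ does not obviously close: after integrating by parts you must bound the mixed masses $\int dd^cw_i\wed(dd^cu)^k\wed(dd^cu_\vepsilon)^{n-1-k}$ uniformly in $\vepsilon$, but Cegrell's inequality \eqref{eq:cegrell-ineq} applies to functions in $\cE_0$, and $u_\vepsilon$ is not in $\cE_0(\Omega)$ (it is defined and plurisubharmonic only on $\Omega_\vepsilon$ and does not vanish on $\d\Omega$); the boundary integrals produced by the integration by parts require the finiteness of exactly these mixed masses, and ``a cut-off'' does not supply it. The paper's Lemma~\ref{lem:holder-potential-measure} is built to avoid this: its induction lets only \emph{one} mollified factor appear at a time, which is either absorbed via $|dd^c\vphi_\delta|\le C\delta^{-2}\beta$ into the current $(dd^c\vphi)^k\wed\beta^{n-k}$ (handled by the induction hypothesis) or moved onto $w$ by parts, with the boundary term dominated by $\int dd^cw\wed(dd^c\vphi)^k\wed(dd^c\rho)^{n-k-1}$ --- a genuine Cegrell mass of $\cE_0$ functions since $dd^c\rho\ge\beta$. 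Your scheme produces the right $\vepsilon^{\alpha'}$ versus $\vepsilon^{-2n}$ balance, but the mass and boundary estimates that make the first term legitimate are left unproved; you would either have to modify $u_\vepsilon$ into an element of $\cE_0$ with controlled Monge-Amp\`ere mass or retreat to the one-factor-at-a-time induction.

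In the converse direction your outline coincides with the paper's (bounded solution, boundary control from the subsolution, comparison with $\hat u_\delta$, stability), but two steps are asserted rather than proved. First, the stability argument you ``expect to need'' requires that $\mu$ be dominated by capacity in the exponential sense \eqref{eq:vol-cap}; this is not free --- the paper derives it from the $\cE_0'$-H\"older continuity by testing against $\max\{v,-s\}-v$ and invoking Ko\l odziej's exponential integrability lemma (Proposition~\ref{prop:analogue-dn}), and without this domination the Chebyshev--capacity iteration does not close. Second, ``the classical bound'' $\|\hat u_\delta-u\|_1\le C\delta^2$ is not automatic for a plurisubharmonic function merely bounded on $\Omega$: Jensen's formula gives $\int_{\Omega_{2\delta}}|\hat u_\delta-u|\,dV_{2n}\le C\delta^2\int_{\Omega_\delta}\Delta u$, and one must control $\int_{\Omega_\delta}\Delta u$ as $\delta\to0$; the paper devotes Lemma~\ref{lem:mass-growth} to proving the weaker but sufficient $\|\hat u_\delta-u\|_1\le C\delta^{\alpha_4}$. (In your setting a uniform bound on $\int_\Omega\Delta u$ can in fact be extracted from Cegrell's inequality against $\rho$, since $u\in\cE_0$ and $dd^c\rho\ge\beta$, but that justification must be supplied.) Finally, the gluing to $A\rho$ needs checking ($A\rho\ge\hat u_\delta-c\delta^\alpha$ on $\d\Omega_\delta$ is not immediate), which is why the paper instead glues $\max\{\hat u_\delta-c_0\delta^\alpha,u\}$ to $u$ itself via Lemma~\ref{lem:holder-boundary}; that choice also makes $\tilde u-u$ supported in $\Omega_\delta$, which is what feeds cleanly into the stability estimate.
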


\begin{remark}
One should remark that the H\"older exponent of $\mu$ on $\cE_0'$ is often different from the one of $\vphi$. \end{remark}

Let us prepare ingredients to prove the theorem. The following lemma will be its necessary condition. 

\begin{lem}
\label{lem:holder-potential-measure} Let $\vphi \in PSH(\Omega)\cap C^{0,\alpha}(\bar\Omega)$ be such that $\vphi =0$ on $\d\Omega$, where $0<\alpha \leq 1.$ Assume that $$\int_\Omega (dd^c\vphi)^n<+\infty.$$ Then, the measure $\nu:=(dd^c\vphi)^n$ is  H\"older continuous on $\cE_0'$. 
\end{lem}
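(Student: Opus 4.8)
The plan is to verify Definition~\ref{def:measure-holder} for $\nu=(dd^c\vphi)^n$ with an explicit exponent. By Lemma~\ref{lem:holder-equivalence} it suffices to bound $\left|\int_\Omega(u-v)\,d\nu\right|$ by $C\|u-v\|_1^{\alpha'}$ for all $u,v\in\cE_0'$ and some $0<\alpha'\le1$; the hypothesis $M:=\int_\Omega(dd^c\vphi)^n<+\infty$ will be used decisively.

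First I would regularize $\vphi$. Using the standard mollification $\vphi*\chi_\delta$ on $\Omega_\delta=\{\mathrm{dist}(\cdot,\d\Omega)>\delta\}$, the H\"older hypothesis gives $\|\vphi-\vphi*\chi_\delta\|_{L^\infty(\Omega_\delta)}\le C\delta^\alpha$, and since $\int_{\bC^n}D^2\chi_\delta=0$ one gets the Hessian bound $\|D^2(\vphi*\chi_\delta)\|_{L^\infty(\Omega_\delta)}\le C\delta^{\alpha-2}$, hence $(dd^c(\vphi*\chi_\delta))^n\le C\delta^{n(\alpha-2)}\beta^n$. To obtain a global competitor $\vphi_\delta\in\cE_0$ I would glue this to a multiple $A\rho$ of the defining function near $\d\Omega$: set $\vphi_\delta:=\max\{\vphi*\chi_\delta-B\delta^\alpha,\,A\rho\}$ on $\Omega_\delta$ and $\vphi_\delta:=A\rho$ on $\Omega\setminus\Omega_\delta$. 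For $B$ large the shifted mollification is negative on $\d\Omega_\delta$ (using $|\vphi|\le C\,\mathrm{dist}(\cdot,\d\Omega)^\alpha$ there), while for $\delta$ small $A\rho\approx-Ac\delta$ dominates it in a collar of $\d\Omega_\delta$, so the two pieces agree near $\d\Omega_\delta$ and $\vphi_\delta\in PSH(\Omega)\cap\cE_0$ vanishes on $\d\Omega$. One checks $\|\vphi-\vphi_\delta\|_\infty\le C\delta^\alpha$, $(dd^c\vphi_\delta)^n\le C\delta^{n(\alpha-2)}\beta^n$, and (from the local uniform boundedness of Monge--Amp\`ere masses of mollifications together with $dd^c\rho\ge\beta$, $\rho\in C^2(\bar\Omega)$) a uniform bound $\int_\Omega(dd^c\vphi_\delta)^n\le M'<+\infty$.

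Next I would split $\int_\Omega(u-v)\,d\nu=I_1+I_2$ with $I_1=\int_\Omega(u-v)[(dd^c\vphi)^n-(dd^c\vphi_\delta)^n]$ and $I_2=\int_\Omega(u-v)(dd^c\vphi_\delta)^n$. For $I_1$ use the telescoping identity $(dd^c\vphi)^n-(dd^c\vphi_\delta)^n=dd^c(\vphi-\vphi_\delta)\wedge T_\delta$, where $T_\delta:=\sum_{k=0}^{n-1}(dd^c\vphi)^k\wedge(dd^c\vphi_\delta)^{n-1-k}$ is positive and closed, and then integrate by parts in $\cE_0$ (Proposition~\ref{prop:cegrell-results}(a)) to move $dd^c$ onto $u-v$: $I_1=\int_\Omega(\vphi-\vphi_\delta)\,dd^c(u-v)\wedge T_\delta$. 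Since $|\vphi-\vphi_\delta|\le C\delta^\alpha$ and $dd^c(u-v)\wedge T_\delta$ is a difference of two positive measures, the Cegrell inequality (Proposition~\ref{prop:cegrell-results}(d)) bounds each mixed mass $\int_\Omega dd^c u\wedge(dd^c\vphi)^k\wedge(dd^c\vphi_\delta)^{n-1-k}$ by $1\cdot M^{k/n}(M')^{(n-1-k)/n}$, giving $|I_1|\le C\delta^\alpha$; here both the finiteness of $M$ and the uniform bound $M'$ are essential. For $I_2$, the density bound on $\vphi_\delta$ gives $|I_2|\le C\delta^{n(\alpha-2)}\int_\Omega|u-v|\,\beta^n\le C\delta^{n(\alpha-2)}\|u-v\|_1$. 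Combining, $\left|\int_\Omega(u-v)\,d\nu\right|\le C\delta^\alpha+C\delta^{-n(2-\alpha)}\|u-v\|_1$, and optimizing with $\delta=\|u-v\|_1^{1/(\alpha+n(2-\alpha))}$ yields the H\"older bound with exponent $\alpha'=\alpha/\big(2n-(n-1)\alpha\big)$ (valid since $\|u-v\|_1$ is bounded on $\cE_0'$; the regime where $\|u-v\|_1$ is bounded below is covered trivially by $|\int(u-v)\,d\nu|\le M\|u-v\|_\infty$).

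The main obstacle is the construction of the global regularization $\vphi_\delta$: the mollification alone is not defined up to $\d\Omega$ and does not lie in $\cE_0$, so neither the integration by parts nor the Cegrell inequality applies to it directly. Arranging the gluing with $A\rho$ so that $\vphi_\delta$ is genuinely plurisubharmonic while retaining the uniform mass bound $M'$ and the pointwise density estimate $(dd^c\vphi_\delta)^n\le C\delta^{n(\alpha-2)}\beta^n$ across the transition region is the delicate point; one likely replaces $\max$ by a regularized maximum to keep these estimates clean. Everything else is bookkeeping of the two competing powers of $\delta$.
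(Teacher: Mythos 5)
Your overall strategy (mollify $\vphi$, split the integral, integrate by parts in $\cE_0$, control the mixed Monge--Amp\`ere masses by the Cegrell inequality, then optimize in $\delta$) is the same as the paper's, but your decomposition is different: you replace all $n$ factors of $dd^c\vphi$ by a regularization at once and compare $(dd^c\vphi)^n$ with $(dd^c\vphi_\delta)^n$ by telescoping, whereas the paper replaces one factor at a time, by induction on $k$ in $S_k=(dd^c\vphi)^k\wedge\beta^{n-k}$, using the induction hypothesis where you use the Lebesgue-density bound. Your version would yield a much better exponent ($\alpha/(2n-(n-1)\alpha)$ versus roughly $(\alpha/3)^n$) --- provided the regularization you need actually exists.

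That is where the genuine gap lies, and you correctly flag the spot but underestimate the obstruction. You need a single $\vphi_\delta\in\cE_0$ satisfying simultaneously (i) $\|\vphi-\vphi_\delta\|_{L^\infty(\Omega)}\le C\delta^\alpha$, (ii) $(dd^c\vphi_\delta)^n\le C\delta^{n(\alpha-2)}\beta^n$, and (iii) a $\delta$-independent bound on $\int_\Omega(dd^c\vphi_\delta)^n$ (hence on the mixed masses via Cegrell). The gluing $\max\{\vphi*\chi_\delta-B\delta^\alpha,\,A\rho\}$ cannot deliver (i) and (iii) together when $\alpha<1$: since $\vphi\ge -C\,\mathrm{dist}(\cdot,\d\Omega)^\alpha$ and $\vphi*\chi_\delta\ge\vphi$, while $A\rho\le -Ac_1\,\mathrm{dist}(\cdot,\d\Omega)$, the set where the maximum equals $A\rho$ reaches out to distance about $\big(C/(Ac_1)\big)^{1/(1-\alpha)}$ from $\d\Omega$ --- a collar of width independent of $\delta$ --- so with $A$ fixed $\|\vphi-\vphi_\delta\|_\infty$ does not decay like any power of $\delta$. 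Forcing the collar to have width $O(\delta)$ requires $A\sim\delta^{\alpha-1}$, and then Proposition~\ref{prop:cegrell-results}(b) only gives $\int_\Omega(dd^c\vphi_\delta)^n\lesssim A^n\to\infty$, which ruins the Cegrell bound on $|I_1|$ except when $\alpha>(n-1)/n$. (The variant $\max\{\vphi*\chi_\delta-B\delta^\alpha,\vphi\}$ rescues (i) and (iii) but destroys (ii).) This is exactly the boundary difficulty alluded to in Remark~\ref{rmk:dn-cpt}, and the paper resolves it by \emph{not} building a global regularization: it extends $\vphi$ H\"older-continuously past $\d\Omega$, keeps $\vphi*\chi_\delta$ as it is, and pays with boundary integrals $\int_{\d\Omega}(\vphi_\delta-\vphi)\,d^cw\wedge S$, which are controlled by Stokes together with the Cegrell inequality. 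A secondary slip: $\|u-v\|_\infty$ is not uniformly bounded on $\cE_0'$ (only the total Monge--Amp\`ere mass is normalized), so your ``trivial'' bound in the regime where $\|u-v\|_1$ is bounded below fails as stated; the correct uniform bound $\big|\int_\Omega(u-v)\,d\nu\big|\le 2\|\vphi\|_\infty\big(\int_\Omega(dd^c\vphi)^n\big)^{(n-1)/n}$ again follows from integration by parts and the Cegrell inequality, as in the paper.
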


\begin{remark}\label{rmk:dn-cpt} On a comact K\"ahler manifold $(X,\omega)$ this is an analogue of Dinh-Nguyen \cite[Proposition~4.1]{DN16} for $\omega-$plurisubharmonic functions. The difference is that in the local setting one needs to deal with the boundary terms which do not appear in the compact manifold.  We succeed to estimate these terms by using the Cegrell inequality \eqref{eq:cegrell-ineq}.\end{remark}

\begin{proof} Denote $\beta= dd^c |z|^2$ and
\[\notag
	S_k:= (dd^c\vphi)^{k} \wed \beta^{n-k}.
\]
Our goal is to show that there exists $0<\alpha \leq 1$ such that for $w, v\in \cE_0'$
\[\notag
	\int_\Omega |w-v| S_n \leq C \|w-v\|_{1}^\alpha.
\]
We proceed  by induction over $0\leq k \leq n$. For $k=0$, the statement obviously holds true with $\alpha_0 =1$. Assume that this holds for integers up to $k<n$. For simplicity let us denote
\[\notag
	S:= (dd^c \vphi)^{k}\wed \beta^{n-k-1}.
\]
The induction hypothesis tells us that there is $0<\alpha_k \leq 1$ such that
\[\label{eq:induction-holder}
	\int_\Omega |w-v| S \wed \beta \leq C  \|w-v\|_{1}^{\alpha_k}.
\]
We need to show that there exists $0<\alpha_{k+1} \leq 1$ such that 
\[ \label{eq:ind-est-goal}
\int_\Omega (w-v) dd^c\vphi \wed S \leq C \|w-v\|_1^{\alpha_{k+1}}
\]
for every $u,v \in \cE_0'$ and $u\geq v$ (the general case will follow as in the proof of Lemma~\ref{lem:holder-equivalence}). Indeed, without loss of generality we may assume that $$\|w-v\|_1>0.$$ Otherwise, the inequality will follow directly from the induction step \eqref{eq:induction-holder}.  Let us still write $\vphi$ to be the H\"older continuous extension of $\vphi$ onto a neighbourhood $U$ of $\bar \Omega$. Consider the convolution of $\vphi$ with the standard smooth kernel $\chi$, i.e.,
$\chi \in C^\infty_c(\Omega)$ such that $\chi(z) \geq 0$, $\supp \chi \subset\subset \Omega $ and $\int_{\bC^n}\chi (z) dV_{2n} =1$.
 Then, we observe that
\[\label{eq:observe-holder-a}
	\vphi_\delta(z) - \vphi(z) = \int_{U} [\vphi(z-\delta \zeta) -\vphi(z)] \chi (\zeta) dV_{2n}(\zeta) 
	\leq C \delta^{\alpha},
\]
\[\label{eq:observe-holder-b}
	\left|\frac{\d^2 \vphi_\delta}{\d z_j \d \bar z_k} (z)\right| \leq \frac{C \|\vphi\|_\infty}{\delta^2}.
\]
We first have
\[\label{eq:ind-est}\begin{aligned}
	\int_\Omega (w-v) dd^c \vphi \wed S 
&\leq 	\left| \int_\Omega (w-v) dd^c \vphi_\delta \wed S \right| \\
&\quad	+\left| \int_\Omega (w-v) dd^c (\vphi_\delta -\vphi) \wed S\right| \\
&=: I_1+ I_2.
\end{aligned}\]
It follows from \eqref{eq:induction-holder} and \eqref{eq:observe-holder-b}  that
\[\label{eq:ind-est-a}
\begin{aligned} I_1
&\leq 	\frac{C\|\vphi\|_\infty}{\delta^2} \int_\Omega (w-v) S \wed \beta \\
&\leq 	\frac{C\|\vphi\|_\infty}{\delta^2} \|w-v\|_1^{\alpha_k}.
\end{aligned}\]
The integration by parts in $\cE_0$ gives:
\[\label{eq:ind-est-aa}
\begin{aligned}
	\int_\Omega wdd^c (\vphi_\delta - \vphi) \wed S 
&=	 \int_\Omega (\vphi_\delta - \vphi) dd^c w\wed S \\
&\quad - \int_{\d \Omega} (\vphi_\delta- \vphi) d^c w \wed S.\\
\end{aligned}\]
Notice that $d^c w\wed S$ is a positive measure on $\d\Omega$ with total mass is
\[\label{eq:ind-est-b}
	\int_\Omega dd^c w \wed S <+\infty.
\]
This finiteness of the integral is obtained as follow. By \eqref{eq:def-fct-a}  it is clear that the defining function $\rho  \in \cE_0$ and $dd^c \rho \geq  \beta$. Therefore
\[\notag
	\int_\Omega dd^c w \wed S \leq  \int_{\Omega} dd^c w \wed (dd^c \vphi)^k\wed (dd^c \rho)^{n-k-1}.
\]
The Cegrell inequality~\eqref{eq:cegrell-ineq} gives us that the right hand side is less than
\[\notag
\left(\int_\Omega (dd^cw)^n\right)^\frac{1}{n} \left(\int_\Omega(dd^c\vphi)^n\right)^\frac{k}{n} \left( \int_\Omega (dd^c\rho)^n\right)^\frac{n-k-1}{n} \leq C
\]
as $w \in \cE_0'$ and $\vphi \in\cE_0.$  Notice that here the constant $C$ is independent of $w$. 
Hence, using \eqref{eq:observe-holder-a}, we get  that
\[\label{eq:ind-est-c}
	\left | \int_{\d\Omega} (\vphi_\delta -\vphi) d^c w \wed S \right | \leq C \delta^\alpha \int_\Omega dd^cw \wed S.
\]
Similarly, 
\[\label{eq:ind-est-d}
	\int_\Omega dd^c v\wed S <+\infty,
\]
\[\label{eq:ind-est-dd}
	\left|\int_{\d\Omega} (\vphi_\delta -\vphi) d^cv\wed S\right| \leq C \delta^\alpha \int_\Omega dd^c v\wed S.
\]
Finally, using \eqref{eq:observe-holder-a}, \eqref{eq:ind-est-aa}, \eqref{eq:ind-est-b}, \eqref{eq:ind-est-c}, \eqref{eq:ind-est-d} and \eqref{eq:ind-est-dd}  we are able to finish the estimate 
\[\label{eq:ind-est-e}
\begin{aligned}
I_2 
&\leq 	\left| \int_\Omega(\vphi_\delta -\vphi) dd^c w \wed S \right| + \left| \int_\Omega(\vphi_\delta -\vphi) dd^c v \wed S \right| \\
&\quad + \left|\int_{\d \Omega} (\vphi_\delta -\vphi) d^c w \wed S\right| + \left|\int_{\d \Omega} (\vphi_\delta -\vphi) d^c v \wed S\right|\\
&\leq 	C \delta^\alpha \left( \int_\Omega dd^c w \wed S + \int_\Omega dd^c v \wed S\right) \\
&\leq		C \delta^\alpha.
\end{aligned}\]
Thanks to \eqref{eq:ind-est-b} and \eqref{eq:ind-est-d} we have
\[\notag\begin{aligned}
	\left| \int_\Omega (w-v) dd^c \vphi \wed S \right| 
&\leq		\left|\int_\Omega \vphi dd^c w \wed S\right| + \left|\int_\Omega \vphi dd^c v\wed S\right| \\
&\leq C \|\vphi\|_\infty.
\end{aligned}\]
From this inequality we can assume that $0<\|w-v\|_1 <1/100$.  
Combining \eqref{eq:ind-est}, \eqref{eq:ind-est-a} and \eqref{eq:ind-est-e} we have
\[\notag
	\int_\Omega (w-v) dd^c \vphi\wed S \leq \frac{C\|\vphi\|_\infty}{\delta^2} \|w-v\|_1^{\alpha_k} + C \delta^\alpha.
\]
Therefore, the proof of \eqref{eq:ind-est-goal} is completed if we choose $$\delta = \|w-v\|_1^\frac{\alpha_k}{3}, \quad \alpha_{k+1}= \frac{\alpha \alpha_k}{3}.$$
Thus, the lemma is proven.
\end{proof}

The measures which are H\"older continuous on $\cE_0'$ satisfy the volume-capacity inequality, which is the content of the next proposition. This inequality plays a crucial role in deriving the {\em apriori} uniform estimate and stability estimates for the solutions to the Monge-Amp\`ere equation.

\begin{prop}
\label{prop:analogue-dn}
Assume that $\nu$ is $\alpha-$H\"older continuous on $\cE_0'$ with finite total mass on $\Omega$. 
Then, there exist uniform constants $\alpha_1>0$ and $C>0$  such that for every compact set $K\subset \Omega$
\[\label{eq:vol-cap}
	\nu (K) \leq C \exp\left(\frac{-\alpha_1}{\left[cap(K,\Omega)\right]^\frac{1}{n}}\right).
\]
\end{prop}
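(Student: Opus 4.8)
The plan is to reduce the volume--capacity estimate to a \emph{uniform} exponential decay of $\nu$ on sublevel sets of functions in $\cE_0'$, and then to obtain that decay by transferring, through the H\"older continuity of $\nu$, the classical exponential integrability of $\cE_0'$ against Lebesgue measure.

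First I would set up the reduction to sublevel sets. Fix a compact $K\subset\Omega$ and write $a:=cap(K,\Omega)^{1/n}$; we may assume $a>0$, since otherwise $K$ is pluripolar and $\nu(K)=0$ by Lemma~\ref{lem:properties-mes-hol}. Let $u=h_K^*$ be the relatively extremal function of $K$, so that $-1\le u\le 0$, $u=-1$ quasi-everywhere on $K$, and $\int_\Omega(dd^cu)^n=cap(K,\Omega)=a^n$. Then $w:=u/a$ lies in $\cE_0'$, because it is bounded, tends to $0$ at $\d\Omega$, and $\int_\Omega(dd^cw)^n=a^{-n}cap(K,\Omega)=1$; moreover $w=-1/a$ quasi-everywhere on $K$. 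Since $\nu$ charges no pluripolar set, $\nu(K)\le\nu(\{w=-1/a\})\le\nu(\{w<-s\})$ for every $0<s<1/a$. Hence it suffices to prove a uniform estimate
\[\notag
\nu(\{w<-s\})\le C\,e^{-\lambda s}\qquad\text{for all } w\in\cE_0',\ s>0,
\]
with $C,\lambda>0$ independent of $w$: letting $s\nearrow 1/a$ then gives $\nu(K)\le C\exp(-\lambda/cap(K,\Omega)^{1/n})$, which is \eqref{eq:vol-cap} with $\alpha_1=\lambda$.

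Next I would prove this sublevel estimate by a truncation argument coupled with the H\"older continuity of $\nu$. For $0<t'<t$ set $w_t:=\max\{w,-t\}$ and $w_{t'}:=\max\{w,-t'\}$, both in $\cE_0'$ by Proposition~\ref{prop:cegrell-results}(c). Inspecting the regions $\{w\ge-t'\}$, $\{-t\le w<-t'\}$, $\{w<-t\}$ shows $0\le w_{t'}-w_t\le(t-t')\mathbf{1}_{\{w<-t'\}}$, with equality to $t-t'$ on $\{w<-t\}$. Integrating against $\nu$ and applying \eqref{eq:measure-holder-b} gives
\[\notag
(t-t')\,\nu(\{w<-t\})\le\int_\Omega (w_{t'}-w_t)\,d\nu\le C\|w_{t'}-w_t\|_1^\alpha\le C\big((t-t')\,V(\{w<-t'\})\big)^\alpha,
\]
where $V$ is Lebesgue measure; thus $\nu(\{w<-t\})\le C(t-t')^{\alpha-1}V(\{w<-t'\})^\alpha$. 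Choosing the fixed unit gap $t'=t-1$ (for $t>1$) collapses the prefactor and leaves $\nu(\{w<-t\})\le C\,V(\{w<-(t-1)\})^\alpha$. It then remains to feed in a uniform exponential decay on the \emph{Lebesgue} side: there exist $C_0,\lambda_0>0$, independent of $w$, with $V(\{w<-\tau\})\le C_0e^{-\lambda_0\tau}$ for all $w\in\cE_0'$. This is the classical uniform integrability of the Cegrell class: the constraint $\int_\Omega(dd^cw)^n\le 1$ bounds all Lelong numbers of $w$ by $1$, so Skoda's theorem yields $\int_\Omega e^{-\lambda_0 w}\,dV\le C_0$ uniformly (cf.\ \cite{GKZ08,ko05}). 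Substituting this bound gives $\nu(\{w<-t\})\le C\,C_0^\alpha e^{\lambda_0\alpha}e^{-\lambda_0\alpha t}$ for $t>1$, while for $0<t\le 1$ one simply uses $\nu(\{w<-t\})\le\nu(\Omega)<+\infty$; together these yield the uniform estimate with $\lambda=\lambda_0\alpha$, closing the reduction.

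The main obstacle is exactly this transfer step. The H\"older continuity of $\nu$ controls $\nu$-integrals only by \emph{Lebesgue} $L^1$-norms, so a purely potential-theoretic iteration cannot work: the capacity of sublevel sets of $\cE_0'$ decays merely polynomially, $cap(\{w<-\tau\})\le\tau^{-n}$, and feeding this in through the H\"older inequality would only reproduce a polynomial bound. The genuine exponential rate therefore has to be imported from the Lebesgue side, and the role of the fixed unit gap $t'=t-1$ is precisely to carry that rate across the H\"older inequality at the cost of only a factor $\alpha$ in the exponent and a multiplicative constant. I would take care to verify the uniformity of the Lebesgue estimate over the whole of $\cE_0'$ (equivalently, the uniform bound on Lelong numbers) and the approximation details in passing from the strict sublevel sets to the closed set $\{w\le-1/a\}$, but these are routine.
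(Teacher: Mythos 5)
Your proof is correct and follows essentially the same route as the paper: both reduce \eqref{eq:vol-cap} to a uniform exponential decay of $\nu$ on sublevel sets of functions in $\cE_0'$ via the scaled relative extremal function $h_K^*/[cap(K,\Omega)]^{1/n}$, and both obtain that decay by applying the H\"older continuity of $\nu$ to a truncation pair and importing the uniform exponential integrability of the class against Lebesgue measure (the paper cites \cite[Lemma 4.1]{ko05} where you invoke Skoda; your two-truncation unit-gap variant is an immaterial reformulation of the paper's comparison of $v$ with $\max\{v,-s\}$). No gaps.
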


\begin{proof}  First, we prove that for  $v\in PSH\cap L^\infty(\Omega)$ and $\lim_{z\to \d \Omega} v(z) =0$ satisfying $\int_\Omega (dd^c v)^n \leq 1$, there exist uniform constants $\alpha_1>0$ (small) and $C>0$ such that
\[\label{eq:vol-cap2}
	\nu (v< -s) \leq C e^{-\alpha_1 s} \quad \forall s>0.
\]
Indeed, put $v_s:= \max\{v, -s\}$. Then
\[\label{eq:vol-cap-a}
	\nu(v < -s -1) = \int_{\{v<-s-1\}} d\nu \leq \int_\Omega |v_s -v| d\nu.
\]
Since $v_s, v \in \cE_0'$ and $\nu$ is $\alpha-$H\"older continuous on $\cE_0'$, the right hand side is bounded by 
\[\label{eq:vol-cap-b}
	C  \left(\int_\Omega |v_s-v|dV_{2n}\right)^\alpha.
\]
Since $0 \leq v_s - v \leq {\bf 1}_{\{v<-s\}} |v| \leq {\bf 1}_{\{v<-s\}} e^{-\alpha_2v}/\alpha_2$ for $\alpha_2>0$ (to be determined later), we have
\[\label{eq:vol-cap-c}\begin{aligned}
	\int_{\Omega} |v_s -v| dV_{2n}
&\leq 	\frac{1}{\alpha_2}\int_{\{v<-s\}} e^{-\alpha_2 v} dV_{2n} \\
&\leq 	\frac{1}{\alpha_2}\int_\Omega e^{-\alpha_2 v - \alpha_2(v+s)} dV_{2n} \\
&\leq 	\frac{e^{-\alpha_2 s}}{\alpha_2} \int_\Omega e^{-2 \alpha_2v} dV_{2n}.
\end{aligned}\]
The last integral is uniformly bounded for $2\alpha_2>0$ small enough thanks to \cite[Lemma 4.1]{ko05}. Combining \eqref{eq:vol-cap-a}, \eqref{eq:vol-cap-b} and \eqref{eq:vol-cap-c}
we get that 
\[\notag
	\nu(v <-s-1) \leq C e^{-\alpha\alpha_2 s}.
\]
Hence, we obtained \eqref{eq:vol-cap2} for $\alpha_1 = \alpha\alpha_2$ with $\alpha_2>0$ small enough.

To finish the proof of the proposition we use an argument which is inspired by the proofs in \cite{ACKPZ09}. Let $K\subset \Omega$ be compact. Since $\nu$ vanishes on pluripolar sets (Lemma~\ref{lem:properties-mes-hol}) we may assume that $K$ is non-pluripolar. Let $h_K^*$ be the relative extremal function of  $K$ with respect to $\Omega$.  
Since $K \subset \Omega$ is compact, it is well-known that 
\[\notag
	\lim_{\zeta \to \d \Omega} h_{K}^*(\zeta)=0.
\]
By \cite[Proposition~5.3]{BT82} we have
\[\notag
	\tau^n:= cap(K, \Omega) = \int_{\Omega} (dd^ch_K^*)^n >0.
\]
Let $0<x<1.$ Since the function $w:= \frac{h_K^*}{\tau}$ satisfies assumptions of the inequality \eqref{eq:vol-cap2}, we have
\[\notag
	\nu (h_K^* < -1 +x) = \nu \left(w < \frac{-1+x}{\tau} \right) \leq C \exp\left({-\frac{\alpha_1(1-x)}{\tau}}\right).
\]
Let $x\to 0^+$, we obtain
\[\label{eq:cap-ineq1}
	\nu(h_K^* \leq -1) \leq C \exp\left({\frac{-\alpha_1}{\left[cap(K,\Omega)\right]^\frac{1}{n}}}\right).
\]
Since $h_K = h_K^*$ outside a pluripolar set, we have
\[\label{eq:cap-ineq2}
	\nu(K) \leq \nu(h_K =-1) = \nu(h_K^* =-1) \leq \nu(h_K^* \leq -1).
\]
We combine \eqref{eq:cap-ineq1} and \eqref{eq:cap-ineq2} to finish the proof.
\end{proof}

We can follow the proofs in  \cite{EGZ09}, \cite{GKZ08}  and \cite{ko96}, to derive the stability estimate for the measures which are well dominated by capacity.

\begin{prop}
\label{prop:stability} 					
Suppose that $\mu = (dd^cu)^n$ for $u \in PSH(\Omega) \cap C^0(\bar\Omega)$ and  $\mu$ satisfies the inequality \eqref{eq:vol-cap}. Let $v \in PSH(\Omega) \cap C^0(\bar \Omega)$ be such that $u = v$ on $\d \Omega$.
Then, there exist constants $C>0$ and $0<\alpha_2<1$ independent of $v$ such that
\[
	\sup_{\Omega} (v-u) \leq C \left(\int_\Omega \max\{v-u,0\} d\mu\right)^{\alpha_2}.
\]
\end{prop}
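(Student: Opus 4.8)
The plan is to run the capacity method of Ko\l odziej, keeping track of how both the volume--capacity bound \eqref{eq:vol-cap} and the mass $a:=\int_\Omega\max\{v-u,0\}\,d\mu$ enter the estimate. Set $M:=\sup_\Omega(v-u)$, which is finite because $u,v\in C^0(\bar\Omega)$, and for $s\ge0$ let $U(s):=\{u<v-s\}$. Since $u=v$ on $\d\Omega$, each $U(s)$ is open and relatively compact in $\Omega$; it is nonempty exactly for $s<M$, and nonempty open sets carry positive relative capacity, so $cap(U(s),\Omega)>0$ precisely on $[0,M)$. Everything then reduces to controlling the decay of $cap(U(s),\Omega)$ in $s$ and converting it into a bound on $M$ in terms of $a$. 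We may assume $a<\infty$, otherwise there is nothing to prove.

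The structural heart is the capacity comparison inequality
\[
t^n\, cap(U(s+t),\Omega)\le \mu(U(s))\qquad(s,t>0).
\]
To prove it I would fix $\psi\in PSH(\Omega)$ with $-1\le\psi\le0$ and set $w:=v-s+t\psi$. On $U(s+t)$ one has $u<v-s-t\le w$, so $U(s+t)\subset W:=\{u<w\}$, while $t\psi\le0$ forces $W\subset U(s)$; moreover $u-w\to s>0$ along $\d\Omega$, so $W\Subset\Omega$ and the Bedford--Taylor comparison principle is legitimate on $W$. Using $(dd^cw)^n=(dd^c(v+t\psi))^n\ge t^n(dd^c\psi)^n$, comparison gives $t^n\int_K(dd^c\psi)^n\le\int_W(dd^cu)^n\le\mu(U(s))$ for every compact $K\subset U(s+t)$; taking the supremum over such $\psi$ and $K$ yields the claim.

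Feeding in \eqref{eq:vol-cap} as $\mu(U(s))\le C\exp(-\alpha_1\,cap(U(s),\Omega)^{-1/n})$ and using $e^{-\alpha_1/x}\le C'x^{2n}$ for all $x>0$, the comparison inequality becomes, with $h(s):=cap(U(s),\Omega)^{1/n}$, the quadratic decay $t\,h(s+t)\le C''h(s)^2$ valid for all $s,t>0$. A geometric iteration with increments $t_k=2C''h(s_k)$ halves $h$ at each step and has summable total length $\sum_k t_k\le 4C''h(s_0)$; by monotonicity and right-continuity of $h$ the limit point satisfies $s_\infty\le s_0+4C''h(s_0)$ and $h(s_\infty)=0$, hence $U(s_\infty)=\emptyset$, so that $M\le s_0+4C''h(s_0)$ for every $s_0>0$. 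Finally the mass enters: the layer-cake identity $a=\int_0^\infty\mu(U(t))\,dt\ge s\,\mu(U(s))$ gives $\mu(U(s))\le a/s$, whence $cap(U(2\sigma),\Omega)\le a\,\sigma^{-(n+1)}$, i.e. $h(2\sigma)\le a^{1/n}\sigma^{-(n+1)/n}$. Taking $s_0=2\sigma$ and optimizing (balancing $\sigma$ against $a^{1/n}\sigma^{-(n+1)/n}$, so $\sigma\sim a^{1/(2n+1)}$) produces $M\le Ca^{1/(2n+1)}$, the desired estimate with $\alpha_2=\tfrac{1}{2n+1}$.

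I expect the capacity comparison inequality to be the main obstacle: the competitor $w=v-s+t\psi$ must be chosen so that the sublevel set $W=\{u<w\}$ is both trapped between $U(s+t)$ and $U(s)$ and compactly contained in $\Omega$, so that the comparison principle genuinely applies. The remaining difficulty is purely quantitative---turning the super-polynomial decay coming from \eqref{eq:vol-cap} into an iteration that forces $cap(U(s),\Omega)$ to vanish in finite ``time'' while simultaneously threading the mass $a$ through the final optimization.
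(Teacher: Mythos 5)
Your proof is correct and is, in substance, exactly the argument the paper relies on: its proof of Proposition~\ref{prop:stability} is a one-line reference to \cite[Theorem~1.1]{GKZ08}, whose proof is precisely this scheme --- the capacity comparison $t^n cap(\{u<v-s-t\},\Omega)\le \mu(\{u<v-s\})$ via the competitor $v-s+t\psi$, the volume--capacity inequality \eqref{eq:vol-cap} converting this into superlinear decay of $h(s)=[cap(U(s),\Omega)]^{1/n}$, a De Giorgi-type iteration forcing $h$ to vanish at a controlled $s_\infty$, and the layer-cake bound threading the mass $\int_\Omega\max\{v-u,0\}\,d\mu$ into the final optimization. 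The only cosmetic point worth recording is that \eqref{eq:vol-cap} is stated for compact sets and must be transferred to the open sublevel sets $U(s)$ by inner regularity of the Radon measure $\mu$ together with the monotonicity of $x\mapsto \exp(-\alpha_1 x^{-1/n})$, which is immediate.
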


\begin{proof} It readily follows from the one of \cite[Theorem~1.1]{GKZ08} with obvious adjustments.
\end{proof}

We proceed to the proof of Theorem~\ref{thm:general}. 
Let $\mu \in \cM(\vphi, \Omega)$ with $\mu(\Omega) <+\infty.$ The theorem asserts that $\mu$ is H\"older continuous on $\cE_0'$ if and only there exists $u\in PSH(\Omega)\cap C^{0,\alpha'}(\bar\Omega)$, $0<\alpha'\leq 1$, solving
$$	
	(dd^cu)^n = \mu, \quad u_{|_{\d\Omega}} =0.
$$

\medskip

First, we see that the sufficient condition follows from Lemma~\ref{lem:holder-potential-measure} applied for $\vphi:=u$. It remains to prove the necessary condition.  By Proposition~\ref{prop:analogue-dn}, the measure $\mu$ satisfies the volume-capacity inequality \eqref{eq:vol-cap}. Therefore, using Ko\l odziej's result \cite[Theorem~5.9]{ko05} we solve $u \in PSH(\Omega) \cap C^0(\bar\Omega)$, 
\[\label{eq:ma-necessary} 
	(dd^cu)^n = \mu, \quad u_{|_{\d\Omega}} =0.
\]
The existence of the H\"older continuous subsolution assures that the solution $u$ is H\"older continuous on the boundary. To see this, we get by the comparison principle \cite{BT82} that
\[\label{eq:boundary-est-a}
	\vphi \leq u \leq 0 \quad \mbox{on } \Omega.
\]
Let us denote for $\delta>0$ small
\[
	\Omega_\delta := \left\{z \in \Omega : dist(z, \d \Omega) > \delta\right\};
\]
and for $z\in \Omega_{\delta}$ we define
\[\label{eq:boundary-est-b}
	u_\delta(z) := \sup_{|\zeta| \leq \delta} u(z+ \zeta),
\]
\[\label{eq:boundary-est-bb}
	\hat u_\delta(z) 
:= \frac{1}{\sigma_{2n} \delta^{2n}} \int_{|\zeta| \leq \delta} u(z + \zeta) dV_{2n}(\zeta),
\]
where $\sigma_{2n}$ is the volume of the unit ball.
\begin{lem}
\label{lem:holder-boundary}
There exist constants $c_0 = c_0(\vphi)>0$ and $\delta_0>0$ small such that for any $0<\delta <\delta_0$,
\[
	u_\delta \leq u(z) + c_0 \delta^{\alpha} 
\]
for every $z\in \d \Omega_{\delta}$, where $\alpha$ is the H\"older exponent of $\vphi.$
\end{lem}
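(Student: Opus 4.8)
The plan is to exploit the two-sided bound $\vphi \le u \le 0$ from \eqref{eq:boundary-est-a} together with the H\"older continuity of $\vphi$ and its vanishing on $\d\Omega$. Since the asserted inequality reads $u_\delta(z) \le u(z) + c_0\delta^\alpha$, it suffices to bound $u_\delta(z)$ from above by $0$ and $u(z)$ from below by $-c_0\delta^\alpha$ for $z \in \d\Omega_\delta$; the two estimates then combine at once, because $0 \le u(z) + c_0\delta^\alpha$.

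First I would record the elementary geometric fact that for $z\in\d\Omega_\delta$ one has $dist(z,\d\Omega)=\delta$, and consequently the closed ball $\bar B(z,\delta)$ is contained in $\bar\Omega$: indeed the open ball $B(z,dist(z,\d\Omega))$ always lies in $\Omega$, since a point of it outside $\Omega$ would force the segment joining it to $z$ to meet $\d\Omega$ at distance strictly less than $dist(z,\d\Omega)$. This guarantees that $u(z+\zeta)$ is defined for every $|\zeta|\le\delta$, so that $u_\delta(z)=\sup_{|\zeta|\le\delta}u(z+\zeta)$ is meaningful; and since $u\le 0$ on $\bar\Omega$ by \eqref{eq:boundary-est-a} and continuity, we get immediately $u_\delta(z)\le 0$.

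Next I would produce the lower bound on $u(z)$. Choosing a boundary point $z^*\in\d\Omega$ realising the distance, so that $|z-z^*|=dist(z,\d\Omega)=\delta$ (such a point exists by compactness of $\d\Omega$), and using in turn $u\ge\vphi$, the vanishing $\vphi(z^*)=0$ from \eqref{eq:sub-zero-boundary}, and the H\"older bound for $\vphi$, I obtain
\[\notag
	u(z)\ \ge\ \vphi(z)\ =\ \vphi(z)-\vphi(z^*)\ \ge\ -[\vphi]_\alpha\,|z-z^*|^\alpha\ =\ -[\vphi]_\alpha\,\delta^\alpha,
\]
where $[\vphi]_\alpha$ is the H\"older seminorm of $\vphi$. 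Taking $c_0:=[\vphi]_\alpha$, which indeed depends only on $\vphi$, gives $u(z)+c_0\delta^\alpha\ge 0\ge u_\delta(z)$, i.e.\ precisely the claim. The constant $\delta_0$ plays no analytic role: it only ensures that $\Omega_\delta\neq\emptyset$ and that $\d\Omega_\delta$ consists of interior points lying at distance exactly $\delta$ from $\d\Omega$, so any $\delta_0$ smaller than, say, half the inradius of $\Omega$ is admissible.

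The argument is short, and the only step demanding a little care is the containment $\bar B(z,\delta)\subset\bar\Omega$, which is what legitimises taking the sup-convolution at points of $\d\Omega_\delta$; all the analytic content is already packaged in the subsolution inequality \eqref{eq:boundary-est-a} and in the H\"older control of $\vphi$ near the boundary. I therefore do not expect a genuine obstacle in this lemma itself. The real work of the regularity proof should instead lie in the subsequent step, where this boundary control is propagated into the interior by comparing $u_\delta$ (or the average $\hat u_\delta$) with $u$ throughout $\Omega_\delta$ via the comparison principle.
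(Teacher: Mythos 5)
Your proposal is correct and is essentially the paper's own argument: the paper likewise combines $u_\delta(z)\le 0$ (from $u\le 0$) with $u(z)\ge \vphi(z)=\vphi(z)-\vphi(z+\zeta_0)\ge -c_0\delta^\alpha$ for a boundary point $z+\zeta_0$ at distance $\delta$, using \eqref{eq:boundary-est-a} and the H\"older continuity of $\vphi$ vanishing on $\d\Omega$. Your added remark that $\bar B(z,\delta)\subset\bar\Omega$ for $z\in\d\Omega_\delta$ is a harmless extra precaution the paper leaves implicit.
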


\begin{proof}
Fix a point $z\in \d\Omega_\delta$.
It follows from \eqref{eq:boundary-est-a} and \eqref{eq:boundary-est-b} that 
\begin{align*}
	u_\delta (z) - u(z) \leq  -\vphi(z).
\end{align*}
Choose $\zeta_0 \in \bC^n$, $\|\zeta_0\| = \delta$, such that
$z+\zeta_0 \in \d \Omega$. Then, $\vphi(z+\zeta_0)=0$, and
\begin{align*}
 - \vphi (z)
&	=  \vphi(z+\zeta_0) - \vphi(z) \\
&	\leq  c_0 \; \delta^\alpha 
\end{align*}
for $0< \delta < \delta_0$ as $\vphi \in C^{0,\alpha}(\bar\Omega)$.
\end{proof}

The following result follows essentially from \cite[Theorem~3.3, Theorem~3.4]{BKPZ16}.

\begin{lem}
\label{lem:mass-growth}  Fix $0< \alpha_4 <1$. Then, we have for every $\delta>0$ small,
\[
	\int_{\Omega_{\delta}} |\hat u_\delta-u| dV_{2n} \leq  C \delta^{\alpha_4}.
\]
\end{lem}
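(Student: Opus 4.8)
The plan is to pass from the average $\hat u_\delta$ to the ordinary Laplacian of $u$, and then to bound the total Laplacian mass of $u$ on $\Omega$ by the Cegrell inequality. Throughout, plurisubharmonicity is used to remove the absolute value: since $u$ is subharmonic and $z+\zeta\in\Omega$ whenever $z\in\Omega_\delta$ and $|\zeta|\le\delta$, the solid sub-mean value inequality gives $\hat u_\delta\ge u$ on $\Omega_\delta$, so $|\hat u_\delta-u|=\hat u_\delta-u$ there. Without this step one would be forced to control the $L^1$-modulus of continuity of a merely bounded function, for which no rate is available.

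First I would invoke the classical potential-theoretic identity that there is a nonnegative radial kernel $k_\delta$ on $\bC^n=\bR^{2n}$, supported in $\{|\zeta|\le\delta\}$ and with $\int_{\bC^n} k_\delta\,dV_{2n}\le C\delta^2$, such that for every subharmonic $u$ and every $z$ with $\operatorname{dist}(z,\d\Omega)>\delta$,
\[\notag
	\hat u_\delta(z)-u(z)=\int_{\bC^n}k_\delta(w-z)\,d(\Delta u)(w),
\]
where $\Delta u$ is the (positive) distributional Laplacian of $u$. Integrating over $z\in\Omega_\delta$ and applying Fubini — legitimate since everything is nonnegative — I would obtain
\[\notag
	\int_{\Omega_\delta}(\hat u_\delta-u)\,dV_{2n}
	=\int_{\Omega}\Bigl(\int_{\Omega_\delta}k_\delta(w-z)\,dV_{2n}(z)\Bigr)d(\Delta u)(w)
	\le C\delta^2\int_{\Omega}\Delta u,
\]
the last bound because $\int_{\Omega_\delta}k_\delta(w-z)\,dV_{2n}(z)\le\int_{\bC^n}k_\delta=C\delta^2$ and $w$ ranges over $\Omega$ (indeed $z\in\Omega_\delta$ and $|w-z|\le\delta$ force $w\in\Omega$).

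It then remains to check that the total Laplacian mass $\int_\Omega\Delta u$ is finite. Up to a dimensional constant this equals $\int_\Omega dd^cu\wedge\beta^{n-1}$, and since $\beta\le dd^c\rho$ by \eqref{eq:def-fct-a} one has $\int_\Omega dd^cu\wedge\beta^{n-1}\le\int_\Omega dd^cu\wedge(dd^c\rho)^{n-1}$. Both $u$ and the defining function $\rho$ lie in $\cE_0$ — for $u$ this is because $u\in PSH\cap L^\infty(\Omega)$, $u=0$ on $\d\Omega$, and $\int_\Omega(dd^cu)^n=\mu(\Omega)<\infty$, while $\rho\in\cE_0$ was already recorded after \eqref{eq:def-fct-a} — so the Cegrell inequality \eqref{eq:cegrell-ineq} gives
\[\notag
	\int_\Omega dd^cu\wedge(dd^c\rho)^{n-1}
	\le\Bigl(\int_\Omega(dd^cu)^n\Bigr)^{\frac1n}\Bigl(\int_\Omega(dd^c\rho)^n\Bigr)^{\frac{n-1}{n}}=:C<\infty.
\]
Combining the displays yields $\int_{\Omega_\delta}|\hat u_\delta-u|\,dV_{2n}\le C\delta^2$, which for $\delta<1$ and any $\alpha_4<1$ is $\le C\delta^{\alpha_4}$, even stronger than asserted.

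I expect the only genuine point to be the kernel representation together with the finiteness of $\int_\Omega\Delta u$; the former is standard harmonic analysis and the latter is exactly where the Cegrell machinery of Proposition~\ref{prop:cegrell-results} enters. As a more elementary alternative avoiding the Laplacian altogether, one can write $\hat u_\delta=u\ast\chi_\delta$ with $\chi_\delta=(\sigma_{2n}\delta^{2n})^{-1}\mathbf 1_{\{|\zeta|\le\delta\}}$, use Fubini to reduce to the integrals $\int_{\Omega_\delta}\bigl(u(z+\zeta)-u(z)\bigr)dV_{2n}(z)$, and bound each of them by $\|u\|_\infty$ times the volume of the symmetric difference $(\Omega_\delta+\zeta)\triangle\Omega_\delta$; this gives the weaker rate $C\delta$, the only subtlety being the uniform estimate $\mathrm{vol}\bigl((\Omega_\delta+\zeta)\triangle\Omega_\delta\bigr)\le C\delta$, which follows from the $C^2$-regularity of $\d\Omega$.
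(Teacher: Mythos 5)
Your argument is correct, and it reaches the conclusion by a genuinely different, and in this context shorter, route than the paper. The paper also starts from the Jensen-type kernel identity \eqref{eq:jensen-ineq}, but it does \emph{not} claim that $\int_\Omega \Delta u$ is finite: instead it proves the weaker bound $\int_{\Omega_\delta}\Delta u \le C\delta^{-(2-\alpha_4)}$ by pairing $\Delta u$ with the weight $(-\rho-\delta_1)^{2-\alpha_4}$ and integrating by parts; that argument uses only $\|u\|_\infty$ and the $C^2$ defining function, so the lemma as proved there is valid for an arbitrary bounded plurisubharmonic $u$, with no hypothesis on its Monge--Amp\`ere mass. You instead use that $u\in\cE_0$ --- i.e.\ $\int_\Omega(dd^cu)^n=\mu(\Omega)<+\infty$, which is indeed available where the lemma is applied, since Theorem~\ref{thm:general} assumes $\mu$ has finite total mass --- together with $\beta\le dd^c\rho$ from \eqref{eq:def-fct-a} and the Cegrell inequality \eqref{eq:cegrell-ineq} to get the uniform bound $\int_\Omega dd^cu\wedge\beta^{n-1}\le c_n\,[\mu(\Omega)]^{1/n}\bigl(\int_\Omega(dd^c\rho)^n\bigr)^{(n-1)/n}$. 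This is legitimate (it is exactly the device the paper itself uses in the proof of Lemma~\ref{lem:holder-potential-measure} to control $\int_\Omega dd^cw\wedge S$), and it yields the stronger rate $C\delta^2$; your direct Fubini over $\Omega_\delta$ also removes the paper's extra step of passing from $\Omega_{2\delta}$ back to $\Omega_\delta$. Your elementary alternative is sound as well: plurisubharmonicity gives $\hat u_\delta\ge u$, so the absolute value comes outside the integral, and the translation identity reduces everything to the volume of $(\Omega_\delta+\zeta)\triangle\Omega_\delta\le C|\zeta|$, giving rate $C\|u\|_\infty\,\delta$ with no mass hypothesis at all. In short, you trade the paper's generality (no finite-mass assumption on $u$) for a shorter proof and a better exponent; for the purposes of Theorem~\ref{thm:general} either version suffices.
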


\begin{proof}
First, we know from the classical Jensen formula (see e.g \cite[Lemma~4.3]{GKZ08}) that
\[\label{eq:jensen-ineq}
	\int_{\Omega_{2\delta}} |\hat u_\delta -u| dV_{2n} \leq C \delta^2 \int_{\Omega_{\delta}} \Delta u(z).
\]
Since both $\rho$ and $- dist (z, \d\Omega)$ are  defining functions of the $C^2$ bounded  domain,  we have $-\rho(z) \geq c_1 \;dist(z, \d\Omega)$ for some uniform constant $c_1 >0$ depending on $\rho$ and $\Omega$. Hence, $$\Omega_{\delta} \subset \{\rho(z) < - c_1\delta\}.$$
Put $2\delta_1= c_1\delta$. Then,
\[\label{eq:jensen-ineq-a0}
	\int_{\Omega_\delta} \Delta u(z) \leq \int_{\{\rho < -2\delta_1\}} \Delta u(z).
\]
We may also assume that $\{\rho<-\delta_1\}$ has the smooth boundary. Then, we have for a fixed $0< \alpha_4 <1$,
\[\label{eq:jensen-eq-a}\begin{aligned}
&	\int_{\{\rho<-2\delta_1\}} \Delta u(z) \\
&\leq 	\delta_1^{-(2-\alpha_4)}\int_{\{\rho<-\delta_1\}} (-\rho -\delta_1)^{2-\alpha_4} \Delta u(z) \\
&= 		c_n \;\delta_1^{-(2-\alpha_4)}  \int_{\{\rho<-\delta_1\}} (-\rho -\delta_1)^{2-\alpha_4} dd^c u \wed \beta^{n-1} \\
&= 		c_n \;\delta_1^{-(2-\alpha_4)}  \int_{\{\rho<-\delta_1\}} u  dd^c (-\rho -\delta_1)^{2-\alpha_4} \wed \beta^{n-1} \\
&\quad + c_n \;\delta_1^{-(2-\alpha_4)} \int_{\{\rho = -\delta_1\}} (-u) d^c(-\rho-\delta_1)^{2-\alpha_4} \wed \beta^{n-1},
\end{aligned}\]
where we used the integration by parts for the third equality, and $c_n$ is a constant depends only on $n$. 

Next, we are going to get a uniform bound for the last two integrals in \eqref{eq:jensen-eq-a}. We compute
\[\label{eq:jensen-eq-b}
\begin{aligned}
dd^c (-\rho-\delta_1)^{2-\alpha_4} 
&= 		(2-\alpha_4) (1-\alpha_4) (-\rho-\delta_1)^{-\alpha_4} d\rho\wed d^c\rho \\
&\quad - (2-\alpha_4) (-\rho-\delta_1)^{1-\alpha_4} dd^c\rho.
\end{aligned}\]
It is clear that
\[\label{eq:jensen-eq-c}
	\int_{\{\rho<-\delta_1\}} |u| (-\rho-\delta_1)^{-\alpha_4} d\rho\wed d^c\rho \wed \beta^{n-1} \leq C \|u\|_\infty \int_{\{\rho<0\}} |\rho|^{-\alpha_4} dV_{2n}.
\]
The integral on the right hand converges as $0<\alpha_4<1$. Similarly, 
\[\label{eq:jensen-eq-d}
	\int_{\{\rho<-\delta_1\}} |u|(-\rho-\delta_1)^{1-\alpha_4} dd^c \rho \wed \beta^{n-1} 
	\leq C \|u\|_\infty \|\rho\|_\infty \int_{\{\rho<0\}} dV_{2n}.
\]
Therefore, by \eqref{eq:jensen-eq-b}, \eqref{eq:jensen-eq-c} and \eqref{eq:jensen-eq-d} the first integral 
\[\label{eq:jensen-eq-e}
	\int_{\{\rho<-\delta_1\}} u  dd^c (-\rho -\delta_1)^{2-\alpha_4} \wed \beta^{n-1} 
	\leq C.
\]
We continue with the second one.
\[\label{eq:jensen-eq-f}\begin{aligned}
&	\int_{\{\rho=-\delta_1\}} (-u) d^c (-\rho-\delta_1)^{2-\alpha_4} \wed \beta^{n-1} \\
&= 	(2-\alpha_4)	\int_{\{\rho=-\delta_1\}}  (-u)  (-\rho-\delta_1)^{1-\alpha_4} d^c\rho \wed \beta^{n-1}\\
&\leq 	C \|u\|_\infty \|\rho\|_\infty \int_{\{\rho=-\delta_1\}} d^c\rho \wed \beta^{n-1}\\
&\leq		C\|u\|_\infty \|\rho\|_\infty \int_{\{\rho<0\}} dd^c\rho \wed \beta^{n-1}.
\end{aligned}\]
Combining \eqref{eq:jensen-ineq-a0}, \eqref{eq:jensen-eq-a}, \eqref{eq:jensen-eq-e} \eqref{eq:jensen-eq-f}, and then substituting $\delta = 2\delta_1/c_1$ we conclude that 
\[\notag
	\int_{\Omega_{2\delta}} \Delta u(z) \leq C \delta^{-(2-\alpha_4)}.
\]
Hence, using this and \eqref{eq:jensen-ineq} we get that
\[\notag\begin{aligned}
	\int_{\Omega_\delta} |u_\delta -u| dV_{2n} 
&\leq		\int_{\Omega_{2\delta}} |u_\delta -u| dV_{2n} + 2 \|u\|_{\infty}\int_{\Omega_{\delta} \setminus \Omega_{2\delta}} dV_{2n} \\
&\leq		C \delta^{\alpha_4} + C\delta.
\end{aligned}\]
We thus completed the proof of the lemma. 
\end{proof}

We  are in the position to complete the theorem.
 
\begin{proof}[End of the proof of Theorem~\ref{thm:general}] It follows from Lemma~\ref{lem:holder-boundary} and $\hat u_\delta \leq u_\delta$ that
\[ \label{eq:extend-solution}
\tilde u = 
\begin{cases} 
	\max\{\hat u_\delta - c_0 \delta^\alpha, u\} \quad 
	&\mbox{ on } \Omega_{\delta},\\
	u  \quad &\mbox{ on } \Omega\setminus \Omega_{\delta},
\end{cases}
\]
belongs to $PSH(\Omega) \cap C^0(\bar \Omega)$. Notice that 
\[\notag
	\int_\Omega (dd^c\tilde u)^n \leq \int_\Omega (dd^cu)^n = \mu(\Omega)<+\infty.
\]
By applying Proposition~\ref{prop:stability} for $\tilde u$ and $u$, there is $0<\alpha_2 \leq 1$ such that 
\[\label{eq:sup-l1}
\begin{aligned}
	\sup_\Omega (\tilde u -u) 
&\leq 	C \left(\int_\Omega\max\{\tilde u -u,0\} d\mu\right)^{\alpha_2} \\
&\leq 	C \left(\int_\Omega|\tilde u -u| d\mu\right)^{\alpha_2}.
\end{aligned}\]
Since $\mu \leq (dd^c\vphi)^n$, it follows from Lemma~\ref{lem:holder-potential-measure}  that $\mu$ is H\"older continuous on $\cE_0'$.  Moreover, 
$$\frac{u}{c_2}, 
\frac{\tilde u}{c_2} \in \cE_0',
$$
where $c_2 = [\mu(\Omega)]^{1/n}$. Therefore, there is $0<\alpha_3 \leq 1$ such that
\[\notag
	\int_\Omega |\tilde u - u| d\mu \leq C \left[\mu(\Omega)\right]^{\frac{1-\alpha_3}{n}}  \|\tilde u -u\|_1^{\alpha_3}. 
\]
By Lemma~\ref{lem:mass-growth} and \eqref{eq:extend-solution} the right hand side is controlled by 
\[\label{eq:l1-measure-lebesgue} 
	C \delta^{\alpha_3\alpha_4}
\] 
for a fixed $0<\alpha_4<1.$
Hence, using Lemma~\ref{lem:holder-boundary}, \eqref{eq:sup-l1} and \eqref{eq:l1-measure-lebesgue},
\[\notag\begin{aligned}
	\sup_{\Omega_{\delta}} (\hat u_\delta - u) 
&\leq		 \sup_\Omega (\tilde u - u) + c_0\delta^\alpha \\
&\leq		C \delta^{\alpha_5},
\end{aligned}\]
where $\alpha_5= \min\{\alpha, \alpha_2\alpha_3\alpha_4\}$. Thanks to \cite[Lemma 4.2]{GKZ08} we infer that
\[\notag
	\sup_{\Omega_\delta} (u_\delta -u) \leq C \delta^{\alpha_5}
\]
for $0<\delta < \delta_0$. Thus the proof of the H\"older continuity of $u$ in $\bar\Omega$ is completed.
\end{proof}

\medskip

Let us now complete the proof of the main theorem.

\begin{proof}[Proof of Theorem~\ref{thm:main}] Since $\mu \in \cM(\vphi,\Omega)$ and $\int_\Omega (dd^c\vphi)^n <+\infty$, it follows from Lemma~\ref{lem:properties-mes-hol} and Lemma~\ref{lem:holder-potential-measure} that $\mu$ is H\"older continuous on $\cE_0'$. By Theorem~\ref{thm:general} there exists $u\in PSH(\Omega)\cap C^{0,\alpha'}(\bar\Omega)$, $0<\alpha' \leq 1$, solving
$$
	(dd^cu)^n = d\mu, \quad u_{|_{\d\Omega}} =0.
$$
Thus, the proof is completed.
\end{proof}

Next, we give the following simple consequence of the main theorem. This result is also partially proved by another way in \cite[Theorem~3.6]{Cha15b}.

\begin{cor}\label{cor:simple-consequence}
Let $\mu\in \cM(\vphi,\Omega)$. Assume that $\mu$ has  compact support in $\Omega$. Then Dirichlet problem~\eqref{eq:zeriahi-prob} is solvable.
\end{cor}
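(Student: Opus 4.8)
The plan is to deduce the corollary from Theorem~\ref{thm:main} by replacing the given subsolution $\vphi$, whose Monge-Amp\`ere mass need not be finite, with a modified subsolution $\psi$ that still dominates $\mu$ but now has \emph{finite} total mass. The point is that the compactness of $K:=\supp\mu\subset\subset\Omega$ leaves us free to alter $\vphi$ away from $K$ however we wish, since there $\mu$ imposes no constraint at all; only on a neighbourhood of $K$ must the modified function continue to carry enough mass.

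Concretely, I would set $\psi:=\max\{\vphi, A\rho\}$ for a constant $A>0$ to be chosen large. Since $K$ is compact and $\rho<0$ on $\Omega$, the function $-\rho$ is bounded below by some $c_K>0$ on $K$; choosing $A$ with $Ac_K>\|\vphi\|_\infty$ then forces $A\rho<\vphi$ on $K$. By continuity the open set $U:=\{A\rho<\vphi\}$ contains $K$, and $\psi=\vphi$ on $U$. Consequently $(dd^c\psi)^n=(dd^c\vphi)^n\geq\mu$ on $U$, and since $\mu$ is carried by $K\subset U$ this gives $\mu\leq(dd^c\psi)^n$ on all of $\Omega$, i.e. $\mu\in\cM(\psi,\Omega)$. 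Moreover $\psi\in PSH(\Omega)\cap C^{0,\alpha}(\bar\Omega)$, being a maximum of two H\"older functions ($\vphi\in C^{0,\alpha}$ and $A\rho\in C^2(\bar\Omega)$), and $\psi=0$ on $\d\Omega$.

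The step I expect to be the crux is showing that $\psi$ has finite total Monge-Amp\`ere mass, because near the boundary $\psi$ may still coincide with $\vphi$, and it is exactly the boundary behaviour of $\vphi$ that could render $\int_\Omega(dd^c\vphi)^n$ infinite. The device that bypasses this is the lower barrier already built into the construction: $\psi\geq A\rho$, and $A\rho\in\cE_0$ since $\rho\in C^2(\bar\Omega)$ vanishes on $\d\Omega$. As $\lim_{z\to\d\Omega}\psi(z)=0$, Proposition~\ref{prop:cegrell-results}(b) applies and yields at once $\psi\in\cE_0$ together with
\[\notag
\int_\Omega (dd^c\psi)^n \leq \int_\Omega \left(dd^c(A\rho)\right)^n = A^n\int_\Omega(dd^c\rho)^n <+\infty .
\]
Thus $\psi$ is a H\"older continuous subsolution of finite Monge-Amp\`ere mass with $\mu\in\cM(\psi,\Omega)$, and Theorem~\ref{thm:main} applied to $\psi$ in place of $\vphi$ produces $u\in PSH(\Omega)\cap C^{0,\alpha'}(\bar\Omega)$ solving \eqref{eq:zeriahi-prob}, completing the proof.
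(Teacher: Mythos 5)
Your proof is correct and rests on the same core construction as the paper's: replace $\vphi$ by a maximum with $A\rho$ so that the new function still dominates $\mu$ on a neighbourhood of $\supp\mu$ but is tamed near $\d\Omega$. The difference lies in how the finiteness of the total Monge--Amp\`ere mass is obtained --- which, as you note, is the crux. The paper takes $\tilde\vphi=\max\{\vphi-\delta,A\rho\}$; the downward shift by $\delta$ forces $\tilde\vphi\equiv A\rho$ on a full neighbourhood of $\d\Omega$, so $\tilde\vphi$ is $C^2$ there and $\int_\Omega(dd^c\tilde\vphi)^n<+\infty$ follows from the Chern--Levine--Nirenberg inequality together with this boundary smoothness. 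You omit the shift, accept that $\psi=\max\{\vphi,A\rho\}$ may still coincide with $\vphi$ in places near the boundary, and instead use the barrier $A\rho\in\cE_0$ together with Proposition~\ref{prop:cegrell-results}(b) to conclude $\psi\in\cE_0$ and $\int_\Omega(dd^c\psi)^n\leq A^n\int_\Omega(dd^c\rho)^n<+\infty$. This is a legitimate and arguably cleaner route: it relies only on a tool the paper has already recorded and dispenses with the $\delta$-shift entirely, at the cost of invoking the (nontrivial, but quoted) mass-comparison statement rather than an elementary smoothness argument. Either way one obtains a H\"older continuous subsolution of finite mass dominating $\mu$, and Theorem~\ref{thm:main} finishes the proof.
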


\begin{proof} Let $\rho$ be the defining function as in \eqref{eq:def-fct-a}. Assume that 
$K:=\supp \mu \subset\subset \Omega
$  and we fix a small constant $\delta>0$.
There exists $A>0$ large enough such that $A\rho \leq \vphi - \delta$ in the neighbourhood $\Omega' \subset\subset \Omega$ of $K$. Set
\[ \notag
\tilde\vphi (z) = 
\max\{\vphi (z) - \delta, A \rho(z)\}.
\]
We easily see that $\tilde\vphi = \vphi$ on $\Omega'$ and $\tilde\vphi = A \rho$ near the boundary $\d\Omega$, and $\tilde\vphi$ is H\"older continuous in $\bar\Omega$ with the same exponent of $\vphi$. Therefore, 
\[\notag
	\mu \leq (dd^c\tilde\vphi)^n \quad\mbox{and}\quad\int_\Omega (dd^c \tilde\vphi)^n <+\infty.
\]
The finiteness of the integral is followed from the Chern-Levine-Nirenberg inequality and $C^2$-smoothness near the boundary of $\tilde\vphi$. 
Using Theorem~\ref{thm:main} we can solve the Dirichlet problem \eqref{eq:zeriahi-prob} for $\mu$.
\end{proof}

It is obvious that the Lebesgue measure $dV_{2n}$ is $1$-H\"older continuous on $\cE_0'$. Then, we have the following general result which contains previous results in \cite{BKPZ16}, \cite{Cha15a}, \cite{GKZ08} for the zero boundary case.

\begin{cor}
\label{cor:lp-density} Let $\nu$ be H\"older continuous on $\cE_0'$. Let $0 \leq f \in L^p(\Omega, d\nu)$, $p>1$.  Then, $f d\nu$ is  H\"older continuous on $\cE_0'$. In particular, the Dirichlet problem \eqref{eq:zeriahi-prob} admits a unique solution for the measure $\mu = fd\nu$. 
\end{cor}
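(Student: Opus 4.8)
The plan is to verify the defining inequality of Definition~\ref{def:measure-holder} for $\mu=f\,d\nu$ by combining H\"older's inequality in the exponent $p$ with an interpolation of $L^r(d\nu)$-norms against the H\"older continuity already available for $\nu$. Let $q=p/(p-1)$ be the conjugate exponent and fix $u,v\in\cE_0'$. H\"older's inequality gives
\[
\int_\Omega |u-v|\, f\, d\nu \leq \|f\|_{L^p(\Omega,d\nu)} \left(\int_\Omega |u-v|^q\, d\nu\right)^{1/q},
\]
so, in view of Lemma~\ref{lem:holder-equivalence}, the whole statement reduces to bounding $\int_\Omega |u-v|^q\,d\nu$ by a positive power of $\|u-v\|_1$.

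The key step is a uniform bound on a slightly higher moment. Fix $q'>q$. Writing $|u-v|=(\max\{u,v\}-u)+(\max\{u,v\}-v)$ and using that $\{\max\{u,v\}-u>t\}\subseteq\{u<-t\}$ (valid because $u,v\le 0$, hence $\max\{u,v\}\le 0$ on $\Omega$), a union bound together with the exponential decay of sublevel sets \eqref{eq:vol-cap2} obtained inside the proof of Proposition~\ref{prop:analogue-dn} yields $\nu(|u-v|>s)\le 2C e^{-\alpha_1 s/2}$ for every $s>0$. Integrating in the layer-cake form $\int_\Omega |u-v|^{q'}\,d\nu=q'\int_0^\infty s^{q'-1}\nu(|u-v|>s)\,ds$ then produces a bound $\int_\Omega|u-v|^{q'}\,d\nu\le M$ with $M$ depending only on $q',C,\alpha_1$, hence uniform over all $u,v\in\cE_0'$.

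Now I interpolate. The small-scale information is the $\alpha$-H\"older continuity of $\nu$ in the form \eqref{eq:measure-holder-b}, i.e. $\int_\Omega|u-v|\,d\nu\le C\|u-v\|_1^{\alpha}$, and the large-scale information is the uniform bound $\int_\Omega|u-v|^{q'}\,d\nu\le M$. Since $1<q<q'$, log-convexity of the $L^r(d\nu)$-norms gives $\theta\in(0,1)$ with $\|u-v\|_{L^q(d\nu)}\le\|u-v\|_{L^1(d\nu)}^{\theta}\|u-v\|_{L^{q'}(d\nu)}^{1-\theta}$, whence $\int_\Omega|u-v|^q\,d\nu\le C\|u-v\|_1^{\alpha q\theta}$. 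Substituting into the H\"older estimate above gives $\int_\Omega|u-v|\,f\,d\nu\le C\|u-v\|_1^{\alpha\theta}$, so by Lemma~\ref{lem:holder-equivalence} the measure $f\,d\nu$ is $(\alpha\theta)$-H\"older continuous on $\cE_0'$.

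For the solvability assertion I first observe that $\nu$ automatically has finite total mass: applying \eqref{eq:vol-cap2} to $\varepsilon\rho\in\cE_0'$ (where $\rho$ is the defining function and $\varepsilon>0$ is small) and letting $s\to 0^+$ gives $\nu(\Omega)=\nu(\{\varepsilon\rho<0\})\le C<+\infty$; consequently $f\in L^1(\Omega,d\nu)$ and $\mu=f\,d\nu$ has finite total mass. Being H\"older continuous on $\cE_0'$ with finite mass, $\mu$ fits the hypotheses of Theorem~\ref{thm:general} provided it is dominated by $(dd^c\psi)^n$ for some H\"older continuous $\psi$ vanishing on $\d\Omega$; in the principal case $\nu=dV_{2n}$ such a H\"older subsolution for an $L^p$-density is classical and strictly weaker than solving the equation. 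Theorem~\ref{thm:general} then yields $u\in PSH(\Omega)\cap C^{0,\alpha'}(\bar\Omega)$ with $(dd^cu)^n=\mu$ and $u|_{\d\Omega}=0$, and uniqueness is the comparison principle of \cite{BT82}. I expect the main difficulty to be the uniform higher-moment estimate: extracting the decay $\nu(|u-v|>s)\le 2Ce^{-\alpha_1 s/2}$ uniformly in $u,v$ and checking that the interpolation exponent $\alpha q\theta$ is admissible is what drives the whole chain, after which the reduction of solvability to Theorem~\ref{thm:general} is routine given a H\"older subsolution.
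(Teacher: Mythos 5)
Your proof is correct, but it takes a genuinely different route from the paper for the key step. The paper applies a three-factor generalised H\"older inequality with exponents $\bigl((n-1)p+1,\ \tfrac{(n-1)p+1}{n-1},\ \tfrac{(n-1)p+1}{(n-1)(p-1)}\bigr)$, so that the factor needing a uniform bound is $\int_\Omega (u-v)^n f\,d\nu$; this is controlled by first solving the auxiliary equation $(dd^c\psi)^n=f\,d\nu$ with Ko\l odziej's uniform estimate \cite{ko05} and then invoking B\l ocki's inequality $\int_\Omega (u-v)^n(dd^c\psi)^n\le n!\|\psi\|_\infty^n\int_\Omega(dd^cv)^n$ \cite{Bl93}. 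You instead use the ordinary conjugate-exponent H\"older inequality and close the argument by interpolating $L^q(d\nu)$ between $L^1(d\nu)$ (controlled by the H\"older continuity of $\nu$ via Lemma~\ref{lem:holder-equivalence}) and a uniform $L^{q'}(d\nu)$ bound for $q'>q$, which you extract from the sublevel-set decay \eqref{eq:vol-cap2} by a layer-cake integration; the inclusion $\{\max\{u,v\}-u>t\}\subseteq\{u<-t\}$ and the resulting tail bound $\nu(|u-v|>s)\le 2Ce^{-\alpha_1 s/2}$ are valid. Your argument is more elementary and self-contained --- it avoids the auxiliary Monge--Amp\`ere equation and B\l ocki's estimate entirely, needs no case distinction at $n=1$, and uses only the decay estimate already established inside Proposition~\ref{prop:analogue-dn} --- at the cost of a less explicit H\"older exponent ($\alpha\theta$ versus the paper's $\alpha/q_3$); both are merely some positive exponent, so nothing is lost. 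One remark on the final solvability claim: as you correctly flag, passing from the H\"older continuity of $f\,d\nu$ on $\cE_0'$ to the solvability of \eqref{eq:zeriahi-prob} via Theorem~\ref{thm:general} requires $f\,d\nu\le(dd^c\vphi)^n$ for some H\"older continuous subsolution $\vphi$ vanishing on $\d\Omega$; the paper is equally implicit about this, since problem \eqref{eq:zeriahi-prob} is only posed for $\mu\in\cM(\vphi,\Omega)$, so your caveat matches the paper's intended reading. Your observation that $\nu(\Omega)<+\infty$ follows automatically from \eqref{eq:vol-cap2} applied to $\varepsilon\rho$ is also correct and fills a hypothesis the corollary does not state explicitly.
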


\begin{proof}
The case $n=1$ is classical, so we assume that $n\geq 2$. It is sufficient  to show that $f d\nu$ is H\"older continuous on $\cE_0'$. Assume that $u, v \in \cE_0'$ and $u\geq v$. Consider the triples
\[\notag
	(q_1, q_2,q_3) = \left((n-1)p+1, \frac{(n-1)p+1}{n-1}, \frac{(n-1)p+1}{(n-1)(p-1)}\right)
\]
and 
\[\notag
\left( \left[(u-v)^nf\right]^\frac{1}{q_1}, f^\frac{p}{q_2}, (u-v)^\frac{1}{q_3}\right).
\]
Then, the generalised H\"older inequality for 
\[\notag
	\frac{1}{q_1} + \frac{1}{q_2} + \frac{1}{q_3} = 1
\]
gives us that
\[\label{eq:lp-case-a}
\int_\Omega (u-v) f d\nu \leq  \left(\int_\Omega(u-v)^n fd\nu\right)^{1/q_1} \|f\|_{L^p(d\nu)}^{p/q_2} \|u-v\|_{L^1(d\nu)}^{1/q_3}. 
\]
Since $\nu$ is H\"older continuous on $\cE_0'$, there is $0<\alpha \leq 1$ such that
\[\label{eq:lp-case-a1}
	\int_\Omega |u-v| d\nu \leq \|u-v\|_1^\alpha.
\]
Therefore, to end the proof, we need to verify that the first factor of the right hand side in \eqref{eq:lp-case-a} is uniformly bounded. Indeed, by the H\"older inequality we have
\[\notag
	\int_K f d\nu \leq \|f\|_{L^{p}(d\nu)}^\frac{1}{p} \left[\nu(K)\right]^\frac{1}{q},
\]
where $1/p+1/q =1$. Hence, $fd\nu$ satisfies the volume-capacity inequality \eqref{eq:vol-cap}. 
By Ko\l odziej's theorem \cite[Theorem~5.9]{ko05} there exists   $\psi \in PSH(\Omega)\cap C^0(\bar\Omega)$ satisfying
$
	(dd^c \psi)^n = f d\nu, \quad \psi_{|_{\d\Omega}} =0
$ 
and 
\[\label{eq:lp-case-b} \|\psi\|_\infty \leq C\left( \|f\|_{L^p(d\nu)}, \Omega, \nu\right).
\]
By an estimate of B\l ocki \cite{Bl93} we have
\[\label{eq:lp-case-c}
	\int_\Omega (u-v)^n (dd^c\psi)^n \leq  n! \|\psi\|_\infty^n \int_\Omega (dd^cv)^n.
\]
The right hand side under controlled as  $v \in \cE_0'$. It follows from  \eqref{eq:lp-case-a},\eqref{eq:lp-case-a1}, \eqref{eq:lp-case-b} and \eqref{eq:lp-case-c} that
\[\notag
	\int_\Omega (u-v)f d\nu\leq C \|u-v\|_1^\frac{\alpha}{q_3}.
\]
Thus, the corollary follows.
\end{proof}

Finally, we emphasise that several interesting examples of positive Borel measures for which one can solve the Dirichlet problem~\eqref{eq:zeriahi-prob} are given in Charabati \cite{Cha15b} (see also \cite{hiep}, \cite{viet16}). In particular, the class of Hausdorff-Riesz measures of order $2n-2+\vepsilon$  with $0<\vepsilon \leq 2$ in $\Omega$. By Theorem~\ref{thm:general} such measures are H\"older continuous on $\cE_0'.$  On the other hand we can prove directly this result.

\begin{lem}
\label{lem:riesz-hausdorff-measure}
Let $\mu$ be a Hausdorff-Riesz measure of order $2n-2 +\vepsilon$ with $0< \vepsilon \leq 2$ in $\Omega$, i.e., for every $B(z,\delta)\subset \Omega$ 
\[\label{eq:hr-a}
	\mu(B(z,\delta)) \leq C \delta^{2n-2+\vepsilon},
\]
where $0<\delta <\delta_0$ small. Assume that $\mu$ has finite total mass. Then, $\mu$ is H\"older continuous on $\cE_0'$.
\end{lem}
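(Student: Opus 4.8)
The plan is to verify the condition \eqref{eq:measure-holder-b}, which is equivalent to H\"older continuity on $\cE_0'$ by Lemma~\ref{lem:holder-equivalence}: I would produce $0<\alpha\le 1$ and a uniform $C$ with $\int_\Omega|u-v|\,d\mu\le C\|u-v\|_1^\alpha$ for all $u,v\in\cE_0'$. Fix a scale $0<\delta<\delta_0$ and regularize by the ball average $\hat u_\delta$ from \eqref{eq:boundary-est-bb}. Since $u$ and $v$ are subharmonic one has $\hat u_\delta\ge u$ and $\hat v_\delta\ge v$, so that
\[\notag
\int_\Omega|u-v|\,d\mu\le \int_\Omega(\hat u_\delta-u)\,d\mu+\int_\Omega|\hat u_\delta-\hat v_\delta|\,d\mu+\int_\Omega(\hat v_\delta-v)\,d\mu.
\]
The middle term is harmless: $\hat u_\delta-\hat v_\delta$ is the ball average of $u-v$, whence $\|\hat u_\delta-\hat v_\delta\|_\infty\le(\sigma_{2n}\delta^{2n})^{-1}\|u-v\|_1$, and since $\mu$ has finite total mass this gives $\int_\Omega|\hat u_\delta-\hat v_\delta|\,d\mu\le C\delta^{-2n}\|u-v\|_1$.

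The heart of the matter is the oscillation estimate $\int_\Omega(\hat u_\delta-u)\,d\mu\le C\delta^\varepsilon$, uniform over $\cE_0'$ (and the same for $v$), which is where \eqref{eq:hr-a} enters. Writing the Riesz representation of the subharmonic function $u$ as $u=N*\Delta u+h$ with $h$ harmonic and $N$ the Newtonian kernel on $\bC^n=\bR^{2n}$, one gets $\hat u_\delta-u=\int K_\delta(\,\cdot-y)\,d\Delta u(y)$, where $K_\delta:=\hat N_\delta-N$ is nonnegative, supported in $\{|x|<\delta\}$, and satisfies $0\le K_\delta(x)\le C|x|^{2-2n}$. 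By Fubini,
\[\notag
\int_\Omega(\hat u_\delta-u)\,d\mu=\int\Big(\int_{\{|z-y|<\delta\}}K_\delta(z-y)\,d\mu(z)\Big)\,d\Delta u(y).
\]
I would bound the inner integral by a dyadic decomposition of $\{|z-y|<\delta\}$: on the shell $\{2^{-j-1}\delta\le|z-y|<2^{-j}\delta\}$ the kernel is $\le C(2^{-j}\delta)^{2-2n}$, while \eqref{eq:hr-a} bounds the $\mu$-mass of $B(y,2^{-j}\delta)$ by $C(2^{-j}\delta)^{2n-2+\varepsilon}$; the $j$-th term is therefore $\le C\delta^\varepsilon 2^{-j\varepsilon}$ and the geometric series converges precisely because $\varepsilon>0$, giving $\int_{\{|z-y|<\delta\}}K_\delta(z-y)\,d\mu(z)\le C\delta^\varepsilon$ uniformly in $y$. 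Finally the total mass $\int_\Omega d\Delta u=c_n\int_\Omega dd^cu\wedge\beta^{n-1}$ is uniformly bounded on $\cE_0'$: by \eqref{eq:def-fct-a} and the Cegrell inequality \eqref{eq:cegrell-ineq},
\[\notag
\int_\Omega dd^cu\wedge\beta^{n-1}\le\int_\Omega dd^cu\wedge(dd^c\rho)^{n-1}\le\Big(\int_\Omega(dd^cu)^n\Big)^{1/n}\Big(\int_\Omega(dd^c\rho)^n\Big)^{(n-1)/n}\le C,
\]
which yields the oscillation bound.

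Combining the three pieces gives $\int_\Omega|u-v|\,d\mu\le C\delta^\varepsilon+C\delta^{-2n}\|u-v\|_1$ for every $0<\delta<\delta_0$. Choosing $\delta=\|u-v\|_1^{1/(2n+\varepsilon)}$ when this is $<\delta_0$ produces the claimed inequality with $\alpha=\varepsilon/(2n+\varepsilon)$; when $\|u-v\|_1$ is not small one simply takes $\delta=\delta_0$ and uses the standard uniform bound $\|u-v\|_1\le C$ on $\cE_0'$ (valid for $n\ge2$), so that $\int_\Omega|u-v|\,d\mu\le C\le C\|u-v\|_1^\alpha$.

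I expect the oscillation estimate to be the main obstacle: obtaining the clean kernel bound $K_\delta(x)\le C|x|^{2-2n}$ on $\{|x|<\delta\}$, and arranging the dyadic summation so that the Hausdorff--Riesz order $2n-2+\varepsilon$ converts exactly into the gain $\delta^\varepsilon$. A secondary technical point is the boundary: the average $\hat u_\delta$ must be defined up to $\d\Omega$ and \eqref{eq:hr-a} must be applied to balls $B(y,\delta)$ that may leave $\Omega$; both are handled using the $C^2$-smoothness (interior cone property) of $\d\Omega$, covering $B(y,\delta)\cap\Omega$ by boundedly many balls contained in $\Omega$ and extending $u$ subharmonically past the boundary as in Corollary~\ref{cor:simple-consequence}. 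For $n=1$ the kernel is logarithmic and the statement is classical, so one may assume $n\ge2$.
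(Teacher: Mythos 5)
Your interior argument is sound and genuinely different from the paper's: you regularize the test functions $u,v\in\cE_0'$ and estimate $\int_\Omega(\hat u_\delta-u)\,d\mu$ directly through the Riesz kernel $K_\delta=\hat N_\delta-N$ and a dyadic application of \eqref{eq:hr-a}, with the total Laplacian mass $\int_\Omega \Delta u$ bounded via \eqref{eq:def-fct-a} and the Cegrell inequality \eqref{eq:cegrell-ineq}, exactly as the paper bounds its boundary masses. The paper instead regularizes a potential of $\mu$: it solves $\Delta\vphi=\mu$ with $\vphi|_{\partial\Omega}=0$, quotes classical potential theory for $\vphi\in C^{0,\alpha}(\bar\Omega)$ (this is where \eqref{eq:hr-a} enters), and then repeats the integration-by-parts scheme of Lemma~\ref{lem:holder-potential-measure} with $S=\beta^{n-1}$. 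Your kernel computation is essentially a Fubini-flipped, self-contained version of that classical fact, which is an attractive feature of your route.

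The gap is the boundary collar, and it is not a secondary point: it is the reason the paper routes the argument through a potential vanishing on $\partial\Omega$. Your three-term decomposition and the oscillation estimate only make sense on $\Omega_{2\delta}$ (you need $B(y,2^{-j}\delta)\subset\Omega$ for every $j$, hence $y\in\Omega_\delta$, and you need $\hat u_\delta$ to be defined), so you are left with $\int_{\Omega\setminus\Omega_{2\delta}}|u-v|\,d\mu$, and neither of your proposed fixes closes it. First, functions in $\cE_0'$ have no uniform modulus of continuity at $\partial\Omega$, so there is no subharmonic extension past the boundary with constants uniform over $\cE_0'$; the extension in Corollary~\ref{cor:simple-consequence} is applied to the single fixed H\"older continuous function $\vphi$, and its constant $A$ depends on the modulus of continuity at $\partial\Omega$, which is exactly what a general element of $\cE_0'$ lacks. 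Second, $B(y,\delta)\cap\Omega$ with $y$ within $\delta$ of $\partial\Omega$ cannot be covered by boundedly many balls contained in $\Omega$ (at height $\eta$ above $\partial\Omega$ an interior ball of radius comparable to $\delta$ covers only a tangential disc of radius comparable to $\sqrt{\delta\eta}$), and a Whitney-type covering yields only $\mu(\Omega\setminus\Omega_{2\delta})\le C\delta^{\varepsilon-1}$, which is useless for $\varepsilon\le 1$; finite total mass gives no rate either. Since $|u-v|$ is merely bounded on the collar, the term $\int_{\Omega\setminus\Omega_{2\delta}}|u-v|\,d\mu$ is not controlled. The repair is to adopt the paper's step: the Newtonian potential $\vphi$ of $\mu$ with zero boundary data is H\"older continuous up to $\bar\Omega$ by \eqref{eq:hr-a}, extends to a neighbourhood of $\bar\Omega$, and the boundary terms arising in the integration by parts are then controlled by $\|\vphi_\delta-\vphi\|_\infty\le C\delta^{\alpha}$ times Cegrell-bounded masses, with no need to regularize $u$ or $v$ at all.
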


\begin{proof} By classical potential theory we solve $\vphi \in SH(\Omega)\cap C^{0,\alpha}(\bar\Omega)$ satisfying
\[\notag
	\Delta \vphi = d\mu, \quad  \vphi_{|_{\d\Omega}} =0.
\]
The H\"older continuity follows from \eqref{eq:hr-a} (see e.g. \cite[Remark~4.2]{DDGKPZ14}). Let us still write $w$ for its H\"older continuous extension onto a neighbourhood $U$ of $\bar\Omega$. Let $w, v \in \cE_0'$ and $w\geq v$. We need to show that
\[\notag
	\int_{\Omega} (w-v) \Delta \vphi  \leq C \|w-v\|_1^{\alpha'}
\]
for some $0<\alpha' \leq 1.$ We can write 
$
	\Delta \vphi =  n dd^c \vphi \wed \beta^{n-1}
$
in the sense of measures. Define $\vphi_\delta$ is the convolution of $\vphi$ with the standard smooth kernel, then we have as in \eqref{eq:observe-holder-a} and \eqref{eq:observe-holder-b} that 
\begin{align*}
	|\vphi_\delta (z) - \vphi (z)| \leq C \delta^\alpha, \\
	\left|\Delta \vphi_\delta (z)\right| \leq \frac{C \|\vphi\|_\infty}{\delta^2}.	
\end{align*}
From this point the proof goes the same as the one in Lemma~\ref{lem:holder-potential-measure} with the closed positive current $S = \beta^{n-1}$.
\end{proof}

\bigskip



\end{document}